%%%%%%%%%%%%%%%%%%%%%%%%%%%%%%%%%%%%%%%%%%%%%%%%%%%%%%%%%%%%%%%%%%%%%%
%%%%%% Grothendieck Fibration, classifying spaces, etc. %%%%%%%%%%%%%%
%%%%%%%%%%%%%%%%%%%%%%%%%%%%%%%%%%%%%%%% by Matias L. del Hoyo %%%%%%%%%%%
%%%%%%%%%%%%%%%%%%%%%%%%%%%%%%%%%%%%%%%%%%%%%%%%%%%%%%%%%%%%%%%
%%%%%%%%%%%%%%%%%%%%%%%%%%%%%%%%%%%%%%%%%%%%%%%%%%%%%%%%%%%%%%%%%%%%%%%%

	\documentclass[12pt,a4paper,oneside]{article}

	\usepackage{amsmath, amsthm, amsfonts, amssymb}
	\usepackage[all]{xy}
	\usepackage{enumerate}
	\usepackage[latin1]{inputenc}

	\textwidth 			14 cm
	\topmargin 			0 cm 
	\oddsidemargin 	1 cm
	\evensidemargin 1 cm 
	\textheight 		21 cm 
	\parindent 			0.5 cm
	
	\swapnumbers

	\theoremstyle{plain}
		\newtheorem{theorem}	  {Theorem}[subsection]
		\newtheorem{proposition}[theorem]{Proposition}
		\newtheorem{corollary}	[theorem]{Corollary}
		\newtheorem{lemma}			[theorem]{Lemma}
	\theoremstyle{remark}
		
		\newtheorem*{example}		{Example}
		\newtheorem*{examples}		{Examples}

		\def\cat	{{\rm Cat}}
		\def\fib	{{\rm Fib}}
		\def\ab		{{\rm Ab}}
		\def\cliv	{{\rm Cliv}}
		\def\esc	{{\rm Esc}}
		\def\set	{{\rm Set}}

		\def\top	{{\rm Top}}
		\def\sset	{{\rm SSet}}
		\def\bsset	{{\rm bSSet}}

		\def\xto{\xrightarrow}
		\def\then{\Rightarrow}
		
		\def\ob{{\rm ob}}
		\def\fl{{\rm fl}}
		\def\id{{\rm id}}
		
		\def\dom{{dom}}
		\def\cod{{cod}}
		\def\colim{{colim}}

		\def\gs{\geqslant}
		
		\def\leq{\leqslant}
		
		\def\u{\underline}
		
		\def\Z{\mathbb Z}

		\def\ss{\square}

    \def\title#1{\noindent{\bf\LARGE{#1}} \bigskip \thispagestyle{plain}}
    \def\author#1{\noindent{\sc #1}\smallskip}
    \def\address#1{\noindent #1}

    \pagestyle{myheadings}
    \markright{{\footnotesize M. del Hoyo -- On the homotopy type of a cofibred category}}

%%%%%%%%%%%%%%%%%%%%%%%%%%%%%%%%%%%%%%%%%%%%%%%%%%%%%%%%%%%%%%%%%%%%%%%%
%%%%%%%%%%%%%%%%%%%%%%%%%%%%%%%%%%%%%%%%%%%%%%%%%%%%%%%%%%%%%%%%%%%%%%%%

\begin{document}

\title{On the homotopy type of a cofibred \\category}

\author{Matias L. del Hoyo}

\address{Departamento de Matem\'atica\\FCEyN, Universidad de Buenos Aires\\Buenos Aires, Argentina.}

\thispagestyle{empty}

%\long\def\symbolfootnote[#1]#2{\begingroup%
%\def\thefootnote{\fnsymbol{footnote}}\footnote[#1]{#2}\endgroup}
%\symbolfootnote[0]{ {\it 2000 Mathematics Subject Classification:}
%18D30; %Fibered categories
%18G30; %Simplicial sets, simplicial objects (in a category)
%55U35. %Abstract and axiomatic homotopy theory
%}
%\symbolfootnote[0]{{\it Key words:} Cofibred category; Classifying space; Nerve.}

%\vspace{3.5cm}
\begin{abstract}
In this paper we describe two ways on which (co)fibred categories give rise to bisimplicial sets.
The {\em fibred nerve} is a natural extension of Segal's classical nerve of a category, and it constitutes an alternative simplicial description of the homotopy type of the total category. If the fibration is splitting, then one can construct the {\em cleaved nerve}, a smaller variant which emerges from a closed cleavage.
We interpret some classical theorems by Thomason and Quillen in terms of our constructions, and use the fibred and cleaved nerve to establish new results on homotopy and homology of small categories.
\end{abstract}

%\bigskip
%
%\noindent\hspace{1.5cm}\parbox{11.3cm}{\noindent {\bf Résumé:}
%Dans cet article on décrit deux façons par lesquelles des catégories (co)fibrées donnent lieu à des ensembles bisimpliciaux. Le nerf fibré est une extension naturelle de la notion du nerf de Segal d'une catégorie. Si la fibration est scindée, alors on peut construire le nerf clivé, une petite variante qui émerge d'un clivage fermé.
%On interprète quelques théorèmes classiques de Thomason et Quillen en termes de cette construction, et on utilise le nerf fibré et clivé pour établir de nouveaux résultats en théorie de l'homotopie et de l'homologie de petites catégories.
%}

%}

%\bigskip

{\small {\bf 2000 MSC:}
18D30; %Fibered categories
18G30; %Simplicial sets, simplicial objects (in a category)
55U35. %Abstract and axiomatic homotopy theory

{\bf Key words:} Cofibred category; Classifying space; Nerve.
}

\section*{Introduction}

The classifying space functor associates to every small category $C$ a topological space $BC$, namely the geometric realization of its nerve \cite{segal}.
The classical homotopy theory of categories is lifted from spaces by using this functor.
For instance, a {\em weak equivalence} between small categories is a map $f:C\to C'$ such that $Bf$ is a homotopy equivalence.

A fundamental fact concerning this construction is that
 for every space $X$ there is a small category $C$ such that $X$ and $BC$ have the same weak homotopy type (cf. \cite[VI,3.3.1]{illusie}, see also \cite{del hoyo}).
This way small categories constitute models for homotopy types, and one seeks to characterize the discrete invariants of $X$ in terms of its underlying category $C$.

It is natural to expect that a small category $C$ endowed with extra structure would give rise to a space $BC$ equipped with some additional data. That is our motivation for introducing the {\em fibred nerve} and the {\em cleaved nerve}. These are bisimplicial sets with the homotopy type of the total category of a Grothendieck fibration, and constitute combinatorial descriptions that preserve in some sense the fibred structure.

By a Grothendieck fibration, or just a {\em fibration}, we mean what is usually called a {\em cofibred category}.
We adopt this terminology for simplicity, and to emphasize the analogy with the topological case.
Other notions of fibrations between small categories have been studied, for instance, in \cite{minian,evrard}.

Grothendieck fibrations have played an important role in homotopy theory. Among others, they were used by Thomason to describe homotopy colimits of small categories \cite{thomason}, and Quillen's Theorems A and B -- that lead to long exact sequences of higher K-theory groups -- may be stated in terms of Grothendieck fibrations \cite{quillen}.
We believe that the nerve constructions studied here will help in further applications, such as explicit constructions of $K(G,n)$ categories and Postnikov towers in $\cat$.

\subsubsection*{Organization}

Section 1 deals with preliminaries. 
We fix some notations and recall some results about the classifying space functor and a key proposition on simplicial sets (\ref{levelwise}). The reader is referred to \cite{quillen} for an introduction to homotopy of small categories, and to \cite{gj} for a comprehensive treatment of bisimplical objects.

The principal reference on Grothendieck fibrations is \cite[VI]{sga1}. A more recent one is \cite{borceux}. In section 2 we set the definitions, recall some facts about fibrations and develop some others which will be needed later, such as the correspondence \ref{s sigma}.

In section 3 we introduce both the fiber and the cleaved nerve, in the same fashion as the classical nerve is defined. We establish some fundamental facts (cf. \ref{fiberwise}, \ref{cleaved fiberwise}) and prove that for a splitting fibration the two constructions yield the same homotopy type (cf. \ref{fibrado clivado}).

We prove that the fibred nerve is homotopy equivalent to the classic nerve in section 4 (cf. \ref{thm2}). From these we derive the original and the relative versions of Quillen's theorem A. In addition, we show how to recover the classic nerve of a splitting fibration using the codiagonal construction over a bisimplicial set (cf. \ref{codiag3}).

The last section summarizes applications and relations between the fibred nerve and some other constructions.
\begin{itemize}	  \itemsep=0ex
\item The cleaved nerve and Bousfield-Kan construction for homotopy colimits are related in \ref{hc}. We derive Thomason's theorem on homotopy colimits of small categories as a consequence.
\item We develop a Leray-Serre style spectral sequence (cf. \ref{spectral sequence}) relating the homology groups of the base, the fibers and the total category. We deduce as a corollary a homology version of Quillen's Theorem A (cf. \ref{homology2}).
\item We introduce Quillen fibrations, which are families of categories with the same homotopy type, and show that Quillen's Theorem B might be interpreted as the following conceptual fact: the fibred classifying space functor maps Quillen fibrations into quasifibrations (\ref{prop}).
\item Finally, we associate to a category endowed with a group action a splitting fibration, and prove that its cleaved nerve is a twisted cartesian product as defined in \cite{may}.
\end{itemize}

\subsubsection*{Acknowledgements}

I would like to thank Gabriel Minian, my advisor. His several suggestions and remarks were essential in the development and revision of this work. I also thank to Fernando Cukierman and Eduardo Dubuc for many stimulating talks. Lastly, I thank to CONICET for the finantial support.

\section{Preliminaries}

We denote by $\cat$, $\sset$ and $\top$ the categories of small categories, simplicial sets and topological spaces, respectively.
If $C$ is a small category, then we denote by $\ob(C)$ its set of objects and by $\fl(C)$ its set of arrows.
As usual we denote the category of (non-empty) finite ordinals by $\Delta$ and by ${\bf\u n}=\{0,\dots,n\}$, the ordinal with $n+1$ elements.
We write $I$ for the simplicial set represented by ${\bf\u 1}$. Sometimes ${\bf\u n}$ will be regarded as a category in the usual way.

\subsection{About homotopy of small categories}

Given $C$ a small category, its {\em nerve} $NC$ is the simplicial set whose $n$-simplices are the chains
$$\u c=(c_0\to c_1\to...\to c_n)$$
of $n$ composable arrows in $C$, and its {\em classifying space} $BC$ is the geometric realization of its nerve, namely $BC=|NC|$. It is a CW-complex with one $n$-cell for each chain of $n$ composable arrows in $C$ which does not involve an identity \cite{segal}.

A functor $f:C\to C'$ in $\cat$ is a {\em weak equivalence} if $Bf$ is a homotopy equivalence in $\top$, and a small category $C$ is {\em contractible} if $BC$ is so. From the homeomorphism $B(C\times I)\cong BC\times BI$ it follows that a functor $C\times I\to C'$ induces a continuous map $BC\times[0,1]\to BC'$ and therefore
a natural transformation $h:f\then g:C\to C'$ yields a homotopy $Bh:Bf\then Bg:BC\to BC'$. This leads to the following results \cite{quillen}.

\begin{lemma}\label{adjoint}
If a functor admits an adjoint, then it is a weak equivalence.
\end{lemma}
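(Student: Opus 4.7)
The plan is to exploit the observation made just before the lemma: a natural transformation $h : f \Rightarrow g$ induces a homotopy $Bh : Bf \Rightarrow Bg$. Once this is in hand, the statement is essentially formal.

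First, I would fix notation and assume without loss of generality that $f : C \to C'$ admits a right adjoint $g : C' \to C$ (the case of a left adjoint is symmetric, or follows by passing to opposite categories since $B$ is invariant under taking opposites at the level of homotopy). The adjunction gives a unit $\eta : \id_C \Rightarrow gf$ and a counit $\varepsilon : fg \Rightarrow \id_{C'}$, both natural transformations.

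Next, I would apply the classifying space functor. Using the cited homeomorphism $B(C \times I) \cong BC \times BI$ and the resulting principle that natural transformations realize to homotopies, the unit and counit yield homotopies
\[
B\eta : \id_{BC} \simeq B(gf) = Bg \circ Bf, \qquad B\varepsilon : Bf \circ Bg = B(fg) \simeq \id_{BC'},
\]
where I also use the functoriality $B(gf) = Bg \circ Bf$, which is immediate since $N$ is a functor and $|\cdot|$ preserves composition.

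Therefore $Bg$ is a two-sided homotopy inverse of $Bf$, so $Bf$ is a homotopy equivalence and $f$ is a weak equivalence by definition. There is no genuine obstacle here; the only substantive ingredient is the translation between natural transformations and homotopies, which is exactly the content of the paragraph preceding the lemma, so the proof reduces to assembling the unit and counit into the two required homotopies.
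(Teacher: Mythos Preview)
Your proof is correct and is exactly the argument the paper has in mind: the lemma is stated without an explicit proof, merely as a consequence of the preceding observation that natural transformations realize to homotopies, and your use of the unit and counit to exhibit $Bg$ as a two-sided homotopy inverse of $Bf$ is the standard (and intended) way to spell this out.
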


\begin{lemma}\label{contractible}
A category having an initial or final object is contractible.
\end{lemma}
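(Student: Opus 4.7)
The plan is to reduce contractibility of $C$ to the fact that $B[0]$ is a point, by exhibiting a weak equivalence between $C$ and the terminal category via Lemma \ref{adjoint}. Assume $C$ has a final object, call it $*$; the initial case will be formally dual.

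First I would introduce the two functors in play: the unique functor $c:C\to[0]$ (to the terminal category with one object) and the functor $i:[0]\to C$ picking out $*$. The key observation is that these form an adjoint pair $c\dashv i$: for any object $x\in C$ there is a bijection
$$\hom_C(x,i(0))=\hom_C(x,*)\cong\{*\}=\hom_{[0]}(c(x),0),$$
which is natural in $x$ because $*$ is final. Hence $i$ is right adjoint to $c$ (and in the dual case, where $*$ is initial, the analogous computation shows that $i$ is left adjoint to $c$).

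Next I would apply Lemma \ref{adjoint} to conclude that $c$ is a weak equivalence, meaning $Bc:BC\to B[0]$ is a homotopy equivalence. Since $[0]$ has a single object and only the identity arrow, its nerve $N[0]$ is the point and so $B[0]$ is a single point. Therefore $BC$ is homotopy equivalent to a point, i.e.\ $C$ is contractible by definition.

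No serious obstacle is anticipated; the only subtlety is checking the adjunction carefully in each case, but this follows immediately from the universal property of an initial or final object. Alternatively, one could avoid invoking Lemma \ref{adjoint} and argue directly: the natural transformation $\id_C\Rightarrow i\circ c$ given on $x$ by the unique arrow $x\to *$ produces, via the paragraph preceding Lemma \ref{adjoint}, a homotopy between $B(i\circ c)$ and $\id_{BC}$, while $c\circ i=\id_{[0]}$ on the nose, yielding the same conclusion.
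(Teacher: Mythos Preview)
Your argument is correct and matches the approach the paper implicitly has in mind: the paper states this lemma immediately after Lemma~\ref{adjoint} as a consequence of the natural-transformation-gives-homotopy principle, without spelling out a proof, and your derivation via the adjunction $c\dashv i$ (or equivalently the direct homotopy from $\id_C\Rightarrow i\circ c$) is exactly the intended reasoning.
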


It is well known that given a commutative triangle in $\top$, if two of the three arrows involved are weak homotopy equivalences, then so does the third. It follows immediately that the same statement holds for weak equivalences in $\sset$ and weak equivalences in $\cat$. We will refer to this fact as the {\em 3-for-2-property}.

\subsection{About bisimplicial sets}

The nerve of a category is a simplicial set. We shall extend this concept by constructing the fibred nerve of a fibration, which is a bisimplicial set. A {\em bisimplicial set} is a functor $K:\Delta^{\circ}\times\Delta^{\circ}\to\set$, where $\Delta^{\circ}$ denotes the opposite category of $\Delta$. 
A bisimplicial set $K$ can be regarded as a family of sets $\{K_{m,n}\}_{m,n\geq0}$ equipped with horizontal and vertical faces and degeneracies operators satisfying the simplicial identities,
and such that the horizontal and vertical operators commute \cite{gj}.
We denote by $\bsset$ the category of bisimplicial sets and morphisms between them.

A bisimplicial set is the same as a simplicial object in $\sset$.
Given $K$ a bisimplicial set, let $\{m\mapsto K_{m,n}\}$ be the $n$-th {\em vertical} simplicial set, which is obtained from $K$ by setting the second coordinate equal to $n$. The $m$-th {\em horizontal} simplicial set $\{n\mapsto K_{m,n}\}$ is defined analogously. We denote by $d(K)$ the {\em diagonal} of $K$, namely the simplicial set which is the composition of $K$ with the diagonal functor $\Delta^{\circ}\to\Delta^{\circ}\times\Delta^{\circ}$.

We define the {\em geometric realization} of $K$ as the space $|d(K)|$, which is naturally homeomorphic to 
the spaces obtained by first realizing on one direction and then on the other \cite[p.10]{quillen}.
$$|n\mapsto|m\mapsto K_{m,n}|| \cong |d(K)| \cong |m\mapsto|n\mapsto K_{m,n}||$$
If $f:K\to L$ is a map of bisimplicial sets, then we say that it is a {\em weak equivalence} if its geometric realization $f_*:|d(K)|\to|d(L)|$ is a homotopy equivalence. The following is a very useful criterion to establish when a map is a weak equivalence
(see e.g. \cite[XII,2.3]{bk} or \cite[IV,1.9]{gj}).

\begin{proposition}\label{levelwise}
Let $f:X\to Y$ be a map in $\bsset$ such that for all $n$ the induced map
$f_*:\{m\mapsto X_{m,n}\} \to \{m\mapsto Y_{m,n}\}$ is a weak equivalence in $\sset$. Then $f$ is a weak equivalence.
\end{proposition}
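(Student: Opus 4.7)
The plan is to exploit the iterated realization formula
$$|d(K)|\;\cong\;|n\mapsto|m\mapsto K_{m,n}||$$
recalled just before the statement, which reduces the problem to a question about geometric realizations of simplicial topological spaces. Set $X'_n=|m\mapsto X_{m,n}|$ and $Y'_n=|m\mapsto Y_{m,n}|$, and write $f'\colon X'\to Y'$ for the induced map of simplicial topological spaces. The hypothesis says precisely that each $f'_n\colon X'_n\to Y'_n$ is a weak equivalence in $\top$; since both $X'_n$ and $Y'_n$ are CW-complexes (they are the realizations of the $n$-th vertical simplicial sets), each $f'_n$ is even a homotopy equivalence. The desired conclusion, after the iterated realization, becomes that $|f'|\colon|X'|\to|Y'|$ is a homotopy equivalence.

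To upgrade level-wise equivalences to an equivalence on realizations, I would argue by induction along the skeletal filtration of a simplicial space. Write $F_k|X'|$ for the image of $\bigsqcup_{n\ls k}\Delta^n\times X'_n$ inside $|X'|$. Standard simplicial-space machinery gives a pushout square
$$\xymatrix{
\partial\Delta^n\times X'_n\cup_{\partial\Delta^n\times L_nX'}\Delta^n\times L_nX' \ar[r] \ar[d] & \Delta^n\times X'_n \ar[d] \\
F_{n-1}|X'| \ar[r] & F_n|X'|
}$$
where $L_nX'$ is the $n$-th latching object. The key point is that, because the simplicial spaces $X'$ and $Y'$ arise from realizations of simplicial sets in one direction, their skeletal filtrations are NDR (equivalently, the latching inclusions $L_nX'\hookrightarrow X'_n$ are cofibrations, and similarly for $Y'$). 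Combined with the inductive hypothesis that $F_{n-1}|X'|\to F_{n-1}|Y'|$ is a homotopy equivalence and the hypothesis that $X'_n\to Y'_n$ is one, the gluing lemma for pushouts along cofibrations yields a homotopy equivalence $F_n|X'|\to F_n|Y'|$. Passing to the sequential colimit in $n$ gives the conclusion $|f'|\simeq$ a homotopy equivalence.

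The main obstacle is the cofibrancy verification: one must check that the simplicial spaces obtained by realizing a bisimplicial set in one variable are \emph{good} (in the sense of Segal) or Reedy cofibrant, so that the skeletal pushouts are homotopy pushouts and the gluing lemma applies. This is where a careful reference like \cite[IV,1.9]{gj} or \cite[XII,2.3]{bk} does the work, essentially by observing that the degenerate simplices in a realized simplicial set sit inside as subcomplexes, so every latching inclusion here is a cellular inclusion of CW-complexes. Once this cofibrancy is granted, the inductive argument above runs without further obstacle.
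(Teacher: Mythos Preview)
The paper does not prove this proposition; it is stated as a known result with references to \cite[XII,2.3]{bk} and \cite[IV,1.9]{gj}, and no argument is supplied. So there is no ``paper's own proof'' to compare against.

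Your sketch is a correct outline of one standard proof. The reduction via iterated realization is exactly the mechanism the paper invokes just above the statement, and the skeletal-filtration/gluing-lemma argument for good simplicial spaces is the content behind the cited references (Goerss--Jardine phrase it via the Reedy model structure, Bousfield--Kan via their homotopy-colimit machinery). The cofibrancy point you flag is genuine but easy here: the degeneracies $s_i\colon X'_n\to X'_{n+1}$ are realizations of injective maps of simplicial sets, hence inclusions of CW subcomplexes, so $X'$ and $Y'$ are good in Segal's sense and the gluing lemma applies. With that observation in hand your inductive argument goes through.
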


\section{Fibrations}

If $u:A\to B$ is a map between small categories, we say that $f\in\fl(A)$ is {\em over} $\phi\in\fl(B)$ if $u(f)=\phi$, and we say that $f\in\fl(A)$ is {\em over} $b\in\ob(B)$ if $u(f)=\id_b$.
Given $b\in\ob(B)$, the {\em fiber} $u_b$ is the subcategory of $A$ of arrows over $b$, and the {\em homotopy fiber} $u/b$ is the category whose objects are pairs $(a,\phi)$, $a\in\ob(A)$ and $\phi:u(a)\to b\in\fl(B)$, and whose arrows $f:(a,\phi)\to(a',\phi')$ are maps $f:a\to a'$ in $A$ such that $\phi'u(f)=\phi$.
By an abuse of notation we shall write $A_b$ and $A/b$ instead of $u_b$ and $u/b$. Note that there is a canonical fully faithful inclusion $A_b\to A/b$, defined by $a\mapsto (a,id_{u(a)})$.

\subsection{Basic definitions and examples}

Let $p:E\to B$ a map between small categories.
An arrow $f:e\to e'$ in $E$ is said to be {\em cartesian} if it satisfies the following universal property:
for all $g:e\to e''$ over $p(f)$ there is a unique $h:e'\to e''$ over $p(e')$ such that $hf=g$.
$$\xymatrix@R=15pt{
					e \ar[rd]_{\forall g} \ar[r]^f & e' \ar@{-->}[d]^{\exists!h}\\ 
					  & e''
				  \\
					p(e) \ar[r]^{p(f)}& p(e')}$$

A map $p:E\to B$ is a {\em prefibration} if for any $e$ object of $E$ and any $\phi:p(e)\to b$ arrow of $B$ there is a cartesian arrow $f:e\to e'$ over $\phi$. 
It is not hard to see that $p:E\to B$ is a prefibration if and only if the inclusion $E_b\to E/b$ of the actual fiber into the homotopy fiber admits a left adjoint for all objects $b$ in $B$. Therefore, if $p:E\to B$ is a prefibration  then the inclusion $E_b\to E/b$ is a weak equivalence for all $b$ (cf. \ref{adjoint}).

A prefibration $p:E\to B$ is called a {\em fibration} if cartesian arrows are closed under composition. We say that $B$ is the {\em base category} and that $E$ is the {\em total category} of the fibration.

\begin{examples}\

\begin{itemize}

\item The projection $\pi:F\times B\to B$ is a fibration, since the arrows $(\id,\phi)\in\fl(F\times B)$ are cartesian.

\item Given $B$ a small category and $F:B\to\cat$ a functor, the projection $F\rtimes B\to B$ is a fibration, whose fibers are the values of $F$. Here $F\rtimes B$ denotes the {\em Grothendieck construction} over $F$ (see e.g. section \ref{Grothendieck construction}).

\item We denote by $B^I$ the category of functors $I\to B$. Its objects are the arrows of $B$ and its maps $(u,v):f\to g$ are the commutative squares $vf=gu$ in $B$.
The functor $\cod:B^I\to B$ which assigns to each arrow its codomain is a fibration.
The fibers of $\cod$ are the {\em slice} categories $B/b$.

\item If $A\subset B$ is a {\em coideal} (cf. \cite{heggie}), then the inclusion $A\to B$ is a fibration, whose fibers are either $\emptyset$ or ${\rm pt}$, the final object of $\cat$.

\item Given $p:E\to B$, an isomorphism $f\in\fl(E)$ is always cartesian. Thus, a functor between groupoids that is onto on arrows is a fibration.

\end{itemize}
\end{examples}

The cartesian arrows in a fibration satisfy the following stronger universal property (cf. \cite{borceux}).

\begin{lemma}\label{upc}
Let $p:E\to B$ be a fibration and $f:e\to e'$ a cartesian arrow in $E$.
Given $g:e\to e''$ such that $p(g)=\phi p(f)$ for some $\phi:p(e')\to p(e'')$, there exists a unique arrow $h:e'\to e''$ over $\phi$ satisfying $hf=g$.
$$\xymatrix@R=10pt{
					e \ar[rrd]_{\forall g} \ar[r]^f & e' \ar@{-->}[dr]^{\exists! h}\\ 
					 &  & e''				  \\
					p(e) \ar[r]^{p(f)} & p(e') \ar[r]^{\phi} & p(e'')}$$
Moreover, $h$ is cartesian if and only if $g$ is so.
\end{lemma}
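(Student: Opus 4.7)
The plan is to reduce the enhanced universal property to the basic one by lifting $\phi$ and using that cartesian arrows are closed under composition in a fibration. Concretely, since $p$ is a (pre)fibration, I can choose a cartesian arrow $k:e'\to\tilde e$ over $\phi$. Because $p$ is a fibration, the composite $kf:e\to\tilde e$ is cartesian over $\phi p(f)=p(g)$. Now the basic universal property of $kf$ applied to $g:e\to e''$ (which is over $p(kf)$) produces a unique $h':\tilde e\to e''$ over $\id_{p(e'')}$ with $h'(kf)=g$. Setting $h:=h'k:e'\to e''$ gives an arrow over $\phi\cdot\id_{p(e'')}=\phi$ satisfying $hf=g$.

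For uniqueness of $h$, suppose $h_1,h_2:e'\to e''$ are both over $\phi$ and satisfy $h_if=g$. Applying the basic universal property of the cartesian arrow $k$ to each $h_i$ (which is over $\phi=p(k)$) produces unique $q_i:\tilde e\to e''$ over $\id_{p(e'')}$ with $q_ik=h_i$. Then $q_i(kf)=h_if=g$, so by uniqueness of the lift through $kf$ we get $q_1=q_2$, hence $h_1=h_2$. This finishes the first statement.

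For the \emph{moreover} part: if $h$ is cartesian, then $g=hf$ is a composition of cartesian arrows, hence cartesian by definition of fibration. Conversely, assume $g$ is cartesian; I want $h$ cartesian. Given any $k:e'\to x$ over $\phi=p(h)$, the composite $kf:e\to x$ lies over $\phi\, p(f)=p(g)$, so the basic universal property of $g$ yields a unique $\ell:e''\to x$ over $\id_{p(e'')}$ with $\ell g=kf$, i.e.\ $(\ell h)f=kf$. Both $\ell h$ and $k$ are arrows $e'\to x$ over $\phi$ that agree after precomposing with the cartesian arrow $f$; applying the uniqueness just established (i.e.\ the first part of this lemma, applied to $f$) forces $\ell h=k$. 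Uniqueness of $\ell$ among arrows over $\id_{p(e'')}$ with $\ell h=k$ follows by the uniqueness of the lift through $g$ of $kf$. Hence $h$ is cartesian.

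The only slightly delicate point is the cancellation step $(\ell h)f=kf\Rightarrow \ell h=k$ in the last direction, since a priori cartesian arrows only enjoy the naive universal property. This is why I carry out the first part of the lemma first: it is precisely the enhanced universal property applied to the cartesian arrow $f$ that supplies the required cancellation. After that, both implications of the \emph{moreover} clause are essentially formal.
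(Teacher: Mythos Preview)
Your proof is correct; the paper itself does not prove this lemma but simply refers to \cite{borceux}, and your argument is essentially the standard one found there: lift $\phi$ to a cartesian arrow, use closure of cartesian arrows under composition, and then apply the naive universal property. Two small cosmetic remarks: (i) when you write ``over $\phi\cdot\id_{p(e'')}=\phi$'' the composition should read $\id_{p(e'')}\cdot\phi$, since $h=h'k$ with $p(k)=\phi$ and $p(h')=\id_{p(e'')}$; (ii) in the \emph{moreover} paragraph you reuse the letter $k$ for a new arrow $e'\to x$, which clashes with the cartesian lift $k:e'\to\tilde e$ introduced earlier --- renaming one of them would improve readability. Neither affects the mathematics.
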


Given a prefibration $p:E\to B$, a {\em cleavage} $\Sigma$ is a choice of cartesian arrows. 
More precisely, a cleavage is a subset $\Sigma\subset \fl(E)$ whose elements are cartesian arrows and such that for all $e\in \ob(E)$ and $\phi:p(e)\to b \in \fl(B)$ there exists a unique arrow $\Sigma_{e,\phi}:e\to e'$ in $\Sigma$ over $\phi$. 

The cleavage $\Sigma$ is said to be {\em normal} if it contains the identities, and is said to be {\em closed} if it is closed under composition. Every prefibration admits a normal cleavage, but not every prefibration admits a closed one. A fibration which admits a closed cleavage is called a {\em splitting fibration}.

\begin{example}
Let $E,B$ be groups, regarded as categories with a single object, and let $p:E\to B$ be a map between them. Then every map of $E$ is cartesian as it is an isomorphism. It follows that $p$ is a fibration if and only if $p$ is an epimorphism of groups. A cleavage $\Sigma$ for $p$ is a set-theoretic section for $p$. The cleavage is normal if $\Sigma$ preserves the neutral element, and the cleavage is closed if it is a morphism of groups.
This example shows in particular that ``only a few'' fibrations are splitting.
\end{example}

From here on we will assume that all the cleavages are normal. The following lemma, whose proof is straight-forward, gives an alternative description of closed cleavages.

\begin{lemma}\label{coro}
A cleavage $\Sigma$ is closed if and only if $f\in\Sigma$ and $f'f\in\Sigma$ imply that $f'\in\Sigma$ for all pair $f,f'$ of composable arrows of $E$.
\end{lemma}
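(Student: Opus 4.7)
The plan is to prove both implications by exploiting the uniqueness clauses in the definition of a cleavage together with the strengthened universal property of cartesian arrows given by Lemma \ref{upc}. Throughout I would write $f\colon e\to e'$ and $f'\colon e'\to e''$ for the composable pair.

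For the forward direction, assume $\Sigma$ is closed and that $f,f'f\in\Sigma$. First I would introduce the arrow $g=\Sigma_{e',p(f')}\colon e'\to\tilde e$, i.e.\ the unique element of $\Sigma$ over $p(f')$ starting at $e'$. Closure of $\Sigma$ gives $gf\in\Sigma$, and $gf$ lies over $p(f')p(f)=p(f'f)$ and starts at $e$; the uniqueness in the definition of a cleavage then forces $gf=f'f$, and in particular $\tilde e=e''$. Now $f$ is cartesian, so Lemma \ref{upc} says that the arrow $h\colon e'\to e''$ over $p(f')$ satisfying $hf=f'f$ is unique; both $f'$ and $g$ are such arrows, hence $f'=g\in\Sigma$.

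For the converse, assume the composability condition and take $f,f'\in\Sigma$. Let $h=\Sigma_{e,p(f'f)}\colon e\to\tilde e$ be the unique element of $\Sigma$ over $p(f')p(f)$ starting at $e$. Since $p(h)=p(f')p(f)$ and $f$ is cartesian, Lemma \ref{upc} yields a unique $k\colon e'\to\tilde e$ over $p(f')$ with $kf=h$, and the lemma's last clause gives that $k$ is cartesian (because $h$ is). The assumed property, applied to $f\in\Sigma$ and $kf=h\in\Sigma$, now delivers $k\in\Sigma$. Uniqueness in the cleavage for arrows starting at $e'$ over $p(f')$ then forces $k=f'$, whence $f'f=kf=h\in\Sigma$, as required.

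I do not expect any real obstacle: the whole argument is a bookkeeping exercise with the two uniqueness statements (one for the cleavage, one for cartesian arrows). The only mild subtlety is to remember that uniqueness in a cleavage also pins down the codomain of the chosen arrow, so that identifying two arrows of $\Sigma$ with the same source and image under $p$ automatically identifies their targets; this is what allows one to match $\tilde e$ with $e''$ in both directions.
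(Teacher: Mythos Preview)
Your argument is correct. The paper does not actually supply a proof of this lemma---it is dismissed with the phrase ``whose proof is straight-forward''---so there is nothing to compare against. Your approach is exactly the natural one: in each direction you set up the relevant arrow from the cleavage, use the uniqueness clause of the cleavage definition to identify two arrows of $\Sigma$ with the same domain and the same image under $p$, and then invoke the uniqueness part of Lemma~\ref{upc} to cancel the cartesian arrow $f$ on the right. The remark at the end about the codomain being determined is precisely the point that makes the identifications go through.

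One very minor comment: in the converse direction you note that $k$ is cartesian by the last clause of Lemma~\ref{upc}, but you do not actually use this fact anywhere (the hypothesis applies to \emph{any} composable pair, not just cartesian ones), so you could drop that sentence. Also, your use of Lemma~\ref{upc} tacitly assumes $p$ is a fibration rather than merely a prefibration; this is the ambient hypothesis throughout the paper, so it is fine, but worth being aware of.
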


\medskip

Next we discuss two notions of morphism between fibrations, and describe the corresponding categories.

Given $\xi=(p:E\to B)$ and $\xi'=(p':E'\to B')$  fibrations, a {\em fibred map} $(f,g):\xi\to\xi'$ is a pair $f:E\to E'$, $g:B\to B'$ of maps in $\cat$ such that $f$ preserves cartesian arrows and  $p'f=gp$.
%$$\xymatrix{E \ar[r]^f \ar[d]^p& E' \ar[d]^{p'}\\ B \ar[r]^g& B'}$$
We denote by $\fib(\xi,\xi')$ the set of fibred maps $\xi\to\xi'$, and by
$\fib$ the category of fibrations and fibred maps between them. 

Now suppose that cleavages $\Sigma$ and $\Sigma'$ of $\xi$ and $\xi'$ are given.
A {\em cleaved map} $(f,g):(\xi,\Sigma)\to(\xi',\Sigma')$ is a fibred map $(f,g):\xi\to\xi'$ such that $f(\Sigma)\subset\Sigma'$.
By $\cliv((\xi,\Sigma),(\xi',\Sigma'))$ we mean the set of cleaved maps $(\xi,\Sigma)\to(\xi',\Sigma')$, and by $\cliv$ the category of pairs $(\xi,\Sigma)$ and cleaved maps.

Finally, we denote by $\esc$ the full subcategory of $\cliv$ whose objects are the pairs $(\xi,\Sigma)$ with $\Sigma$ a closed cleavage of $\xi$.

We have the following diagram, where the first is a full inclusion and the arrow $\cliv\to\fib$ is the forgetful functor $(\xi,\Sigma)\mapsto \xi$.
$$\esc\subset\cliv\to\fib\subset\cat^I$$

%As usual, we say that $(f,g):\xi\to\xi'$ is a fibred map {\em over $B$} if $B=B'$ and $g=\id_B$.

With the notations of above, we will say that $f:E\to E'$ is a {\em fibred map over $B$} if $B=B'$ and $(f,\id_B):\xi\to\xi'$ is a fibred map. A {\em cleaved map over $B$} is defined similarly.

\subsection{Fibration associated to a map}\label{decomposition}

Given $u:A\to B$ a map between small categories, we define the {\em mapping category} $E^u$ as the fiber product $A\times_{B}B^{I}$ over $u$ and $\dom:B^I\to B$ in $\cat$.
The objects of $E^u$ are pairs $(a, u(a)\to b)$, with $a$ an object of $A$ and $u(a)\to b$ an arrow of $B$, and the arrows are pairs $(f,g)$ which induce a commutative square in $B$.

The functor $u$ factors through $E^u$ as $\pi i$,
where $i$ is the inclusion $a\mapsto(a,\id_{u(a)})$, and $\pi$ is the projection $(a,u(a)\to b)\mapsto b$.
$$\xymatrix{ A \ar[r]^i  \ar@/_1pc/[rr]_u & E^u \ar[r]^{\pi}  & B}$$
The functor $i$ is fully faithful and admits a right adjoint, the retraction $r:E^u\to A$, which maps $(a,u(a)\to b)$ into $a$. This implies the following (cf. \ref{adjoint}).

\begin{lemma}\label{replacement}
The map $i:A\to E^u$ is a weak equivalence.
\end{lemma}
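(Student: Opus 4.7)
The proof essentially reduces to invoking Lemma \ref{adjoint}, since the paragraph preceding the lemma has already asserted that $i$ admits the right adjoint $r$. My plan is therefore to verify this adjunction in detail and then appeal to \ref{adjoint}.

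First I would describe the unit and counit explicitly. The composite $ri$ sends $a\mapsto(a,\id_{u(a)})\mapsto a$, so $ri=\id_A$ and the unit $\eta:\id_A\then ri$ can be taken to be the identity. The composite $ir$ sends $(a,u(a)\to b)$ to $(a,\id_{u(a)})$; the counit $\varepsilon:ir\then\id_{E^u}$ at the object $(a,\phi:u(a)\to b)$ is the arrow in $E^u$ given by the pair $(\id_a,\phi)$, which represents the commutative square with top edge $\id_{u(a)}$, bottom edge $\phi$, and sides $\id_{u(a)}$ and $\phi$. Naturality in $(a,\phi)$ is straightforward from the definition of maps in $E^u$.

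Next I would check the adjunction hom-bijection directly, which simultaneously verifies the triangle identities. An arrow in $E^u$ from $i(a)=(a,\id_{u(a)})$ to $(a',\phi':u(a')\to b')$ is a pair $(f,h)$ with $f:a\to a'$ in $A$ and $h:u(a)\to b'$ in $B$ satisfying $h\cdot\id_{u(a)}=\phi' \cdot u(f)$; this forces $h=\phi'u(f)$, so the arrow is determined by $f$ alone. Hence
\[
\hom_{E^u}(i(a),(a',\phi'))\;\cong\;\hom_A(a,a')\;=\;\hom_A(a,r(a',\phi')),
\]
naturally in both variables, which establishes $i\dashv r$.

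Finally, I would invoke Lemma \ref{adjoint}: since $i$ admits an adjoint, $Bi$ is a homotopy equivalence, i.e.\ $i$ is a weak equivalence in $\cat$. There is no real obstacle here; the only place one has to be a little careful is in spelling out what an arrow of the pullback $E^u=A\times_B B^I$ actually looks like, so that the hom-bijection above is unambiguous.
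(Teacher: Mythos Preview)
Your proof is correct and follows exactly the paper's approach: the paper simply asserts in the preceding paragraph that $i$ has right adjoint $r$ and then invokes Lemma~\ref{adjoint}, whereas you additionally spell out the unit, counit, and hom-bijection verifying $i\dashv r$. The extra detail is fine but not required.
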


The functor $\pi$ is a fibration. 
The set $\Sigma^u\subset\fl(E^u)$ of arrows whose first coordinate is an identity
$$\Sigma^u=\{(\id_a,\phi):(a,u(a)\to b)\to(a,u(a)\to b')\},$$
is a closed cleavage for $\pi$, so it is a splitting fibration.
We say that $\pi:E^u\to B$ is the {\em fibration associated to} $u$, and we endow it with the cleavage $\Sigma^u$. Note that if $b$ is an object of $B$, then the fiber $E^u_b$ of $\pi$ is isomorphic to the homotopy fiber $A/b$ of $u$.

\medskip

Except in very special situations, the retraction $r:E^u\to A$ does not commute with the projections, namely $(r,\id_B)$ is not a map in $\cat^I$. We shall describe how to replace $r$ by others well-behaved retractions when the map $u$ is already a fibration.

Let $p:E\to B$ be a fibration, and let $\pi:E^p\to B$ be its associated fibration.
We say that a map $s:E^p\to E$ is {\em good} if $si=\id_E$, $ps=\pi$ and $s$ preserves cartesian arrows.
$$\xymatrix@C=10pt@R=15pt{E \ar[rr]^i \ar[dr]_p &  & E^p \ar@/^/[ll]^s \ar[dl]^{\pi}\\ & B}$$
If $s$ is good, then $s$ is a fibred map over $B$.

\begin{lemma}\label{good good}
A good map $s$ is a weak equivalence, and it induces a weak equivalence $s:E^p_b\to E_b$ for all object $b$ in $B$.
\end{lemma}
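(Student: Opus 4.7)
The plan is to deduce both statements from the 3for2-property, using the relation $s \circ i = \id_E$ provided by the ``good'' hypothesis.

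For the global claim, consider the triangle $s \circ i = \id_E$. Here $\id_E$ is trivially a weak equivalence and $i: E \to E^p$ is a weak equivalence by Lemma \ref{replacement}, so 3for2 forces $s$ to be a weak equivalence as well.

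For the fiberwise claim, I first check that $s$ and $i$ restrict properly to the fibers over $b$. The hypothesis $ps = \pi$ ensures that $s$ maps $E^p_b$ into $E_b$: if $\pi(x) = b$ then $p(s(x)) = \pi(x) = b$. Similarly $i$ restricts, since $i(e) = (e, \id_{p(e)})$ lies over $p(e)$ under $\pi$. Restricting $si = \id_E$ gives a triangle $s|_{E^p_b} \circ i|_{E_b} = \id_{E_b}$ in $\cat$. Now, under the identification $E^p_b \cong E/b$ recorded in section \ref{decomposition}, the restricted $i|_{E_b}$ becomes the canonical fully faithful inclusion of the actual fiber into the homotopy fiber. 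Since $p$ is a fibration, and in particular a prefibration, this inclusion admits a left adjoint and is therefore a weak equivalence by Lemma \ref{adjoint}, as already noted in the paragraph preceding the definition of fibration. A second application of 3for2 then yields that $s|_{E^p_b}: E^p_b \to E_b$ is a weak equivalence.

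The only conceptual point worth checking carefully is the identification of $i|_{E_b}$ with the canonical fiber-into-homotopy-fiber inclusion under $E^p_b \cong E/b$; beyond that, the lemma is a formal consequence of 3for2 applied twice, with no computation required. The ``hard part'', insofar as there is one, is purely bookkeeping: verifying that the ``good'' conditions really do force $s$ to restrict fiberwise to a section of the weak equivalence $E_b \to E^p_b$.
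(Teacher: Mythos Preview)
Your proof is correct and follows essentially the same approach as the paper's: both arguments use that $s$ is a left inverse to the weak equivalence $i$ (globally and fiberwise, identifying $E^p_b\cong E/b$) and invoke 3for2. Your version is slightly more explicit about why $s$ and $i$ restrict to the fibers, but the underlying reasoning is identical.
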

\begin{proof}
The first statement holds by \ref{replacement} since $s$ is left inverse to $i$. About the second, note that under the isomorphism $E^p_b\cong E/b$ the induced map $E^p_b\to E_b$ identifies with a left inverse to the inclusion $E_b\to E/b$, which is a weak equivalence indeed.
\end{proof}

\begin{proposition}\label{s sigma}
Given $p:E\to B$ a fibration, there is a 1-1 correspondence between (normal) cleavages of $E$ and good maps $s:E^p\to E$.
\end{proposition}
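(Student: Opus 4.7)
My plan is to exhibit the bijection explicitly in both directions and then verify that the two constructions are mutually inverse. Given a normal cleavage $\Sigma$, I define $s_\Sigma$ on an object $(e,\phi)$ by taking the codomain of the cleavage arrow, $s_\Sigma(e,\phi):=\cod(\Sigma_{e,\phi})$; on an arrow $(f,v):(e,\phi)\to(e',\phi')$ I use that $p(\Sigma_{e',\phi'}f)=\phi'p(f)=v\phi=v\cdot p(\Sigma_{e,\phi})$, so Lemma~\ref{upc} supplies a unique $h$ over $v$ with $h\Sigma_{e,\phi}=\Sigma_{e',\phi'}f$, which I take as $s_\Sigma(f,v)$. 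Functoriality of $s_\Sigma$ is routine via the uniqueness in~\ref{upc} and the normality condition $\Sigma_{e,\id_{p(e)}}=\id_e$, while $s_\Sigma i=\id_E$ and $ps_\Sigma=\pi$ are immediate from the construction. Conversely, given a good $s$, I set $\Sigma_s:=\{s(\id_e,\phi):e\in\ob E,\ \phi\in\fl B,\ \dom\phi=p(e)\}$; since the canonical cleavage $\Sigma^p$ of $E^p$ consists precisely of the arrows $(\id_e,\phi)$, which are cartesian, and $s$ preserves cartesian arrows, each element of $\Sigma_s$ is cartesian. The source is $e$ by $si=\id_E$ and the projection is $\phi$ by $ps=\pi$, so $\Sigma_s$ is a normal cleavage of $p$.

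To close the correspondence, I verify both inverse properties. The equality $\Sigma_{s_\Sigma}=\Sigma$ follows because $s_\Sigma(\id_e,\phi)$ is, by construction, the unique arrow over $\phi$ whose composite with $\id_e$ equals $\Sigma_{e,\phi}\id_e$, namely $\Sigma_{e,\phi}$ itself. For $s_{\Sigma_s}=s$, the object-level identity $s_{\Sigma_s}(e,\phi)=\cod s(\id_e,\phi)=s(e,\phi)$ is immediate, and on an arrow $(f,v)$ I must check that $s(f,v)$ is the unique arrow over $v$ with $h\cdot s(\id_e,\phi)=s(\id_{e'},\phi')\cdot f$; using $f=si(f)=s(f,p(f))$ and functoriality of $s$, both sides evaluate to $s(f,v\phi)$, forcing $h=s(f,v)$.

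The step that requires genuine care, and the main obstacle, is showing that $s_\Sigma$ preserves cartesian arrows. My approach is to first describe the cartesian arrows of $E^p$: factoring $(f,v)=(f,\id)\circ(\id_e,v)$ with $(\id_e,v)\in\Sigma^p$ cartesian, the cancellation half of~\ref{upc} makes $(f,v)$ cartesian equivalent to the vertical arrow $(f,\id)$ being cartesian, and a vertical cartesian arrow is always an isomorphism; hence $(f,v)$ is cartesian in $E^p$ exactly when $f$ is an isomorphism in $E$. With this in hand, if $(f,v)$ is cartesian then $\Sigma_{e',\phi'}f$ is a composition of an isomorphism with a cartesian arrow, again cartesian, and one further application of~\ref{upc} shows $h=s_\Sigma(f,v)$ is cartesian. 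Everything else is bookkeeping with Lemma~\ref{upc}.
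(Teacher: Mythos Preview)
Your proof is correct and follows essentially the same route as the paper's: both directions of the bijection are constructed identically, and the mutual-inverse checks (which the paper leaves as ``straightforward'') are the ones you carry out. Your explicit characterization of the cartesian arrows of $E^p$ as the $(f,v)$ with $f$ invertible is a nice addition that justifies the step the paper compresses into ``it also follows from \ref{upc} that $s$ preserves cartesian arrows''.
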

\begin{proof}
Let $s:E^p\to E$ be a good map. For each $e\in\ob(E)$ and $\phi:p(e)\to b\in\fl(B)$ the arrow  $(id_e,\phi):(e,\id:p(e)\to p(e))\to(e,\phi:p(e)\to b)$ is cartesian in $E^p$. Therefore, $s(id_e,\phi)$ is a cartesian arrow of $E$ over $\phi$ with domain $s(i(e))=e$. It follows that the family
$\Sigma=\{s(id_e,\phi)\}_{e,\phi}$
is a cleavage of $E$, and it is normal because $s(\id_e,\id_{p(e)})=s(i(\id_e))=\id_e$.

Conversely, if $\Sigma$ is a normal cleavage of $E$, then we shall construct a good map $s=s(\Sigma):E^p\to E$  as follows.
An object $(e,\phi:p(e)\to b)$ in $E^p$ is mapped by $s$ into the codomain of $\Sigma_{e,\phi}\in\ob(E)$.
An arrow $(\alpha,\beta):(e,\phi:p(e)\to b)\to(e',\phi':p(e')\to b')$ of $E^p$ is mapped by $s$ into the unique arrow over $\beta$ which makes the following diagram commutative.
$$\xymatrix@R=10pt@C=15pt{
e\ar[rr]^{\alpha}\ar[rd]_{\Sigma_{e,\phi}}& & e'\ar[rd]^{\Sigma_{e',\phi'}}&\\
&s(e)\ar@{-->}[rr]_{s(\alpha,\beta)}& &	s(e') \\
p(e)\ar[rr]^{p(\alpha)}	\ar[rd]_{\phi}&	& p(e')\ar[rd]^{\phi'}& \\
 &	b	\ar[rr]_{\beta}	&	&			b'}$$
The uniqueness of $s(\alpha,\beta)$ follows from \ref{upc}. It also follows from \ref{upc} that $s$ preserves cartesian arrows. As it respects identities and compositions, $s$ is indeed a functor, and $ps=\pi$ by construction.
The map $s$ defined this way is a retraction for $i:E\to E^p$ because $\Sigma$ is normal.

It is straightforward to check that these procedures are mutually inverse.
\end{proof}

If $E$ is endowed with a cleavage $\Sigma$ and $s:E^p\to E$ is a good map such that $s(\Sigma^u)\subset\Sigma$, then we say that $s$ is {\em very good}.
If $s$ is very good, then $s$ is a cleaved map over $B$.

\begin{corollary}\label{very good map}
If $s$ and $\Sigma$ are related as in \ref{s sigma}, then $\Sigma$ is closed if and only if the map $s$ is very good.
\end{corollary}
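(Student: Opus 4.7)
The strategy is to pin down exactly how the recipe in \ref{s sigma} evaluates $s$ on the distinguished cleavage $\Sigma^p$, and then read the very-good condition as a statement about composition in $\Sigma$. Elements of $\Sigma^p$ have the form $(\id_e,\psi):(e,\phi_0)\to(e,\psi\phi_0)$. The correspondence of \ref{s sigma} together with normality ($\Sigma_{e,\id}=\id_e$) yields $s(\id_e,\phi_0)=\Sigma_{e,\phi_0}$ whenever the source is $(e,\id_{p(e)})$. Applying $s$ to the composition $(\id_e,\psi)\circ(\id_e,\phi_0)=(\id_e,\psi\phi_0)$ therefore produces the key identity
$$s(\id_e,\psi)\,\Sigma_{e,\phi_0} \;=\; \Sigma_{e,\psi\phi_0},$$
which will drive both implications.

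For the forward direction, assume $\Sigma$ is closed. The identity above exhibits $s(\id_e,\psi)$ as the second factor in a factorisation of $\Sigma_{e,\psi\phi_0}\in\Sigma$ through $\Sigma_{e,\phi_0}\in\Sigma$. Lemma \ref{coro}, which characterises closed cleavages exactly by this factorisation property, then forces $s(\id_e,\psi)\in\Sigma$. Since every arrow of $\Sigma^p$ is of this shape, $s(\Sigma^p)\subset\Sigma$, i.e.\ $s$ is very good.

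For the converse, assume $s$ is very good. Given a composable pair $\Sigma_{e,\phi_0}:e\to e_1$ and $\Sigma_{e_1,\psi}:e_1\to e_2$ in $\Sigma$, I need to check that their composite lies in $\Sigma$. The arrow $s(\id_e,\psi)$ has domain $s(e,\phi_0)=e_1$ and lies over $\psi$, and it belongs to $\Sigma$ by hypothesis; by the uniqueness clause in the definition of a cleavage it must then coincide with $\Sigma_{e_1,\psi}$. Substituting into the key identity yields $\Sigma_{e_1,\psi}\,\Sigma_{e,\phi_0}=\Sigma_{e,\psi\phi_0}\in\Sigma$, so $\Sigma$ is closed under composition.

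The main bookkeeping hurdle is simply tracing through the construction of $s$ from \ref{s sigma} carefully enough to verify that, on arrows of $\Sigma^p$ whose source is $(e,\id_{p(e)})$, $s$ literally reproduces the chosen cleavage arrows, so that functoriality of $s$ delivers the key identity. Once that step is secured, one direction reduces to an appeal to Lemma \ref{coro} and the other to the uniqueness built into the definition of a cleavage.
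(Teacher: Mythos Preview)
Your proof is correct and follows essentially the same route as the paper. Both arguments hinge on the identity $s(\id_e,\psi)\,\Sigma_{e,\phi_0}=\Sigma_{e,\psi\phi_0}$, read off from the defining diagram in \ref{s sigma}, and both invoke Lemma~\ref{coro} for the forward direction; the only cosmetic difference is that you prove the converse directly (showing any composite of cleavage arrows lies in $\Sigma$), whereas the paper argues the contrapositive (producing an arrow $g=s(\id,p(g))\notin\Sigma$ when $\Sigma$ fails to be closed).
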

\begin{proof}
Let $\Sigma$ be a closed cleavage and $s$ its induced good map. If $(\id_e,\beta)$ is an arrow in $\Sigma^u$, then the diagram of above gives $s(\id_e,\beta)\Sigma_{e,\phi}=\Sigma_{e,\beta\phi}$. It follows from \ref{coro} that $s(\id_e,\beta)\in\Sigma$ and hence the map $s$ is very good.

On the other hand, given $\Sigma$ a cleavage which is not closed, by \ref{coro} one can find $f$ and $f'$ cartesian arrows of $E$ such that $f'=gf$ with $g\notin\Sigma$. Since $g=s(id,p(g))$ it follows that $s$ is not very good.
\end{proof}

\section{Bisimplicial sets from fibrations}

\subsection{Fibred nerve}\label{ultima}

For $m,n\gs 0$ let $\ss_{m,n}$ denotes the fibration $pr_2:{\bf\u m}\times{\bf\u n}\to {\bf\u n}$. These are the fibrations which play the role of simplices in $\fib$.  They define a covariant functor $\ss:\Delta\times\Delta\to \fib$. 

Given $\xi=(p:E\to B)$ a fibration, we define the {\em fibred nerve of $\xi$} as the bisimplicial set $N_f\xi$ whose $m,n$-simplices are given by
$$N_f\xi_{m,n}=\fib(\ss_{m,n},\xi).$$
We define the {\em fibred classifying space} $B_f\xi$ as the geometric realization $|d(N_f\xi)|$ of the fibred nerve.
These constructions are functorial.
For short, we shall write $N_fE$ and $B_fE$ instead of $N_f\xi$ and $B_f\xi$.

The fibred nerve extends the classical nerve in the sense that there exists a natural isomorphism 
$$d(N_f(\id_B))=d(N_fB)\cong NB.$$

A $m,n$-simplex of $N_fE$ consists of a pair $s=(s_0,s_1)$, where $s_0:{\bf\u m}\times{\bf\u n}\to E$ and $s_1:{\bf\u n}\to B$ are such that the induced square commutes.
We say that $s_1\in NB_n$ is the {\em base} of the simplex $s$, and that $s_0|_{pr_2^{-1}(0)}\in (NE_{b_0})_m$ is the {\em mast} of $s$. Of course, $s_0$ completely determines $s$.

We visualize $s$ as an array of arrows of $E$ going down and right. The horizontal arrows are cartesian and the vertical arrows are over identities.
$$\xymatrix@R=10pt@C=20pt{
e_{0,0}\ar[r]\ar[d] & e_{0,1}\ar[r]\ar[d] & \dots\ar[r] & e_{0,n}\ar[d] \\
e_{1,0}\ar[r]\ar[d] & e_{1,1}\ar[r]\ar[d] & \dots\ar[r] & e_{1,n}\ar[d] \\
\dots\ar[d]		   & \dots\ar[d]		& \dots		  & \dots\ar[d]  \\
e_{m,0}\ar[r]       & e_{m,1}\ar[r]		& \dots\ar[r] & e_{m,n}		 \\
b_0\ar[r]		   & b_1\ar[r]			& \dots\ar[r] & b_n			 }$$
%If $i\leq i',j\leq j'$, we write $s_{(i,j)}^{(i',j')}$ $s_{(i,j)\to(i',j')}$ for the value of $s_0$ in the unique arrow between the objects $(i,j),(i',j')$ of ${\bf\u m}\times{\bf\u n}$.
Sometimes we will write $e_{i,j}^s$ to denote $s_0((i,j))$, and $e_{i,j}^s\to e_{i',j'}^s$ to denote $s_0((i,j)\to(i',j'))$.% $(i,j)\in\ob\ss_{m,n}$.
%to denote the value of $s_0$ in the object $(i,j)$ of ${\bf\u m}\times{\bf\u n}$.

The next technical result plays a key role hereafter. Fix $\u b\in NB_n$, and let $N_fE_{\u b}$ be the simplicial set whose simplices are those of $N_fE$ with base $\u b$, with faces and degeneracies in the vertical direction. 

\begin{lemma}\label{m}
The map $\mu:N_fE_{\u b}\to NE_{b_0}$ which assigns to each simplex $s$ its mast is a weak equivalence of simplicial sets.
\end{lemma}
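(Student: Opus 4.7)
My plan is to realize $\mu$ as the nerve of an equivalence of categories. Let $\mathrm{Cart}(\u b)$ denote the category whose objects are functors $[n]\to E$ lifting $\u b:[n]\to B$ and sending every arrow to a cartesian arrow, with morphisms those natural transformations whose components lie in the fibers of $p$. Unpacking definitions, an $m$-simplex of $N_fE_{\u b}$ is exactly an $m$-chain in $\mathrm{Cart}(\u b)$---the $m+1$ horizontal rows of the array, linked by the $m$ columns of fiber arrows---so one obtains a natural simplicial isomorphism $N_fE_{\u b}\iso N(\mathrm{Cart}(\u b))$, under which $\mu$ corresponds to the nerve of the evaluation functor $\mathrm{ev}_0:\mathrm{Cart}(\u b)\to E_{b_0}$ sending a cartesian lift to its first entry.

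Next, I would prove that $\mathrm{ev}_0$ is an equivalence of categories. For essential surjectivity, given $e\in\ob(E_{b_0})$, the prefibration property lets me inductively choose cartesian arrows over each $b_{j-1}\to b_j$, assembling a cartesian lift of $\u b$ whose first entry is $e$. For full faithfulness, suppose we are given two objects of $\mathrm{Cart}(\u b)$ with first entries $e_0,e_0'$ and an arrow $h_0:e_0\to e_0'$ in $E_{b_0}$. Since cartesian arrows are closed under composition (as $p$ is a fibration), each composite $e_0\to e_j$ in the first lift is cartesian, while the composite $e_0\to e_0'\to e_j'$ projects under $p$ to $(b_0\to b_j)=\id_{b_j}\cdot(b_0\to b_j)$; Lemma \ref{upc} then provides a unique $h_j:e_j\to e_j'$ over $\id_{b_j}$ with $h_j\cdot(e_0\to e_j)=(e_0'\to e_j')\cdot h_0$. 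A second application of uniqueness forces the $\{h_j\}$ to assemble into a natural transformation and shows that any natural transformation extending $h_0$ must be this one, giving the desired bijection on hom-sets.

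Having shown $\mathrm{ev}_0$ to be an equivalence of categories, it admits a quasi-inverse adjoint, so by Lemma \ref{adjoint} it is a weak equivalence; passing to nerves yields that $\mu=N(\mathrm{ev}_0)$ is a weak equivalence in $\sset$. The substantive content lies in the application of Lemma \ref{upc}, which collapses all data of a morphism between cartesian lifts down to its value over $b_0$; the identification $N_fE_{\u b}\iso N(\mathrm{Cart}(\u b))$ is a matter of recognizing the combinatorial structure and checking that faces and degeneracies in the vertical direction match, which I expect to be routine bookkeeping.
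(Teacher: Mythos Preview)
Your argument is correct and takes a genuinely different route from the paper. The paper works directly at the simplicial level: it picks a cleavage $\Sigma$, uses it to build an explicit section $\nu:NE_{b_0}\to N_fE_{\u b}$ (sending a mast to the unique array whose horizontal generators lie in $\Sigma$), notes $\mu\nu=\id$, and then writes down an explicit simplicial homotopy $h:N_fE_{\u b}\times I\to N_fE_{\u b}$ between $\nu\mu$ and $\id$ by interpolating row-by-row between the given array and its $\Sigma$-normalized replacement. Your approach instead recognizes $N_fE_{\u b}$ as the nerve of the category $\mathrm{Cart}(\u b)$ and reduces the problem to showing $\mathrm{ev}_0$ is an equivalence of categories, which you do cleanly via Lemma~\ref{upc}. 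Conceptually your proof is more illuminating: it explains \emph{why} the mast determines everything up to canonical isomorphism, and it avoids the combinatorics of an explicit simplicial homotopy. The paper's hands-on construction, on the other hand, exhibits the homotopy inverse and the homotopy concretely, which is in the spirit of the explicit bisimplicial manipulations used elsewhere (e.g.\ in the proof of~\ref{teo thomason}); but nothing later in the paper actually requires that explicit form, so your argument suffices for all subsequent applications.
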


\begin{proof}
We choose a cleavage $\Sigma$ and construct a homotopy inverse $\nu:NE_{b_0}\to N_fE_{\u b}$ for $\mu$ as follows. The map $\nu$ associates to a simplex $\u a$ the unique simplex $s=\nu(\u a)$ with mast $\u a$ and base $\u b$ and such that $e_{i,j}^s\to e_{i,j+1}^s\in\Sigma$ 
for all $i,j$. It is clear that $\mu\nu=\id$. We shall describe a simplicial homotopy
$h:N_fE_{\u b}\times I\to N_fE_{\u b}$
between $\nu\mu$ and $\id$, which induces a continuous homotopy $|h|$ and completes the proof.

We have that $(N_fE_{\u b}\times I)_m=(N_fE_{\u b})_m\times I_m$, and that $I_m=\{t:{\bf\u m}\to{\bf\u 1}\}$.
Given $(s,t)\in (N_fE_{\u b}\times I)_m$ we define $h(s,t)$ as the unique $m$-simplex of $N_fE_{\u b}$ with the same mast as $s$ and such that 
$$e_{i,j}^{h(s,t)}\to e_{i,j+1}^{h(s,t)}=
	\begin{cases}
	e_{i,j}^s\to e_{i,j+1}^s & \text{if } t(i)=0\\
	e_{i,j}^{\nu\mu(s)}\to e_{i,j+1}^{\nu\mu(s)}& \text{if } t(i)=1
	\end{cases}$$
It is easy to see that $h$ defined as above is a simplicial map, that $h(s,0)=s$ and that $h(s,1)=\nu\mu(s)$.
\end{proof}

The main feature of the fibred nerve is that it satisfies the following homotopy preserving property.

\begin{proposition}\label{fiberwise}
Let $\xi=(p:E\to B)$ and $\xi'=(p':E'\to B)$ be fibrations, and let $f:E\to E'$ be a fibred map over $B$.
If $f:E_b\to E'_b$ is a weak equivalence for all objects $b$ of $B$, then $f_*:N_fE\to N_fE'$ is a weak equivalence. 
\end{proposition}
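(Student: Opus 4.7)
The plan is to reduce the claim to a statement about vertical simplicial sets via Proposition \ref{levelwise}, and then reduce further to the fibers using Lemma \ref{m}.

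First, I fix $n\geq 0$ and look at the $n$-th vertical simplicial sets $\{m\mapsto N_fE_{m,n}\}$ and $\{m\mapsto N_fE'_{m,n}\}$. Since every $m,n$-simplex of $N_fE$ has a well-defined base in $NB_n$, and the same happens for $N_fE'$, the assignment ``base'' produces decompositions
\[
\{m\mapsto N_fE_{m,n}\}=\bigsqcup_{\u b\in NB_n} N_fE_{\u b},
\qquad
\{m\mapsto N_fE'_{m,n}\}=\bigsqcup_{\u b\in NB_n} N_fE'_{\u b},
\]
and because $f$ lies over $\id_B$, the map $f_*$ respects bases, hence restricts fiberwise to maps $f_{\u b}:N_fE_{\u b}\to N_fE'_{\u b}$ for each $\u b$. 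Since geometric realization commutes with disjoint unions, the $n$-th vertical map is a weak equivalence as soon as each $f_{\u b}$ is one.

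Second, I compare each $f_{\u b}$ with $N(f|_{E_{b_0}}):NE_{b_0}\to NE'_{b_0}$ using Lemma \ref{m}. The ``mast'' maps $\mu$ and $\mu'$ are clearly natural with respect to fibred maps over $B$ (applying $f$ to the first column of a diagram picks out precisely $f|_{E_{b_0}}$ on the mast), so we obtain a commutative square
\[
\xymatrix@R=15pt{
N_fE_{\u b}\ar[r]^{f_{\u b}}\ar[d]_{\mu} & N_fE'_{\u b}\ar[d]^{\mu'}\\
NE_{b_0}\ar[r]_{N(f|_{E_{b_0}})} & NE'_{b_0}
}
\]
in $\sset$. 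By Lemma \ref{m}, both vertical maps are weak equivalences, and by hypothesis the bottom horizontal map is a weak equivalence. The 3for2-property then gives that $f_{\u b}$ is a weak equivalence.

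Combining both steps, for every $n$ the $n$-th vertical component of $f_*$ is a weak equivalence, and Proposition \ref{levelwise} concludes that $f_*:N_fE\to N_fE'$ is a weak equivalence. The one point that requires slight care is the naturality of the mast map, but this is immediate from the construction, so no serious obstacle arises: the proof is essentially a clean assembly of Proposition \ref{levelwise}, Lemma \ref{m}, and 3for2.
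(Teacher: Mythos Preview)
Your proof is correct and follows essentially the same approach as the paper: reduce via Proposition~\ref{levelwise} to the vertical simplicial sets, decompose these as coproducts over $\u b\in NB_n$, and then use the commutative square involving the mast maps $\mu$ from Lemma~\ref{m} together with the 3for2-property. The paper's argument is identical in structure and detail.
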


\begin{proof}
By proposition \ref{levelwise} it suffices to prove that the map
$f_*:\{m\mapsto N_f E_{m,n}\}\to\{m\mapsto N_fE'_{m,n}\}$
is a weak equivalence for each $n$.
Faces and degeneracies in direction $m$ preserve the base of a simplex, thus we have decompositions
$$\{m\mapsto N_fE_{m,n}\}=\coprod_{\u b=(b_0\to\dots\to b_n)}N_fE_{\u b}$$
and
$$\{m\mapsto N_fE'_{m,n}\}=\coprod_{\u b=(b_0\to\dots\to b_n)}N_fE'_{\u b}.$$
Moreover, $f_*$ also preserves the base of a simplex, and therefore it can be written as the coproduct of the maps $f_*:N_fE_{\u b}\to N_fE'_{\u b}$.
Now consider the following commutative square.
$$\xymatrix{N_fE_{\u b} \ar[r]^{f_*}\ar[d]^{\mu}& N_fE'_{\u b}\ar[d]^{\mu} \\NE_{b_0} \ar[r]^{f_*}& NE'_{b_0}}$$
The vertical maps are weak equivalences by \ref{m}, and the bottom one is so by hypothesis. It follows from the 3-for-2 property that the upper one is also a weak equivalence and thus the proposition follows.
\end{proof}

\subsection{Cleaved nerve}

The fibration $\ss_{m,n}$ is splitting, since its unique cleavage $\Sigma=\{(id,\alpha)\}$ is closed. We consider $\ss_{m,n}$ as equipped with this cleavage, and we obtain a covariant functor $\ss:\Delta\times\Delta\to \esc\subset\cliv$.

Given $\xi=(p:E\to B)$ a fibration endowed with a cleavage $\Sigma$, we define the {\em cleaved nerve of $(\xi,\Sigma)$} as the bisimplicial set $N_c(\xi,\Sigma)$ whose $m,n$-simplices are given by
$$N_c(\xi,\Sigma)_{m,n}=\cliv(\ss_{m,n},(\xi,\Sigma))$$
We define the {\em cleaved classifying space} of $B_c(\xi,\Sigma)$ as the geometric realization $|d(N_c(\xi,\Sigma))|$ of the cleaved nerve.
These constructions are functorial.
As before, we shall write $N_cE$ and $B_cE$ instead of $N_c(\xi,\Sigma)$ and $B_c(\xi,\Sigma)$ when there is no place to confusion.

The cleaved nerve extends the classical nerve in the sense that there is a natural isomorphism $$d(N_c(\id_B))=d(N_cB)\cong NB,$$
where $\id:B\to B$ is equipped with the cleavage $\Sigma=\fl(B)$.

Note that, if we forget the cleavage $\Sigma$, then we can form the fibred nerve $N_fE$ and there is a natural inclusion in $\bsset$
$$i:N_cE\to N_fE.$$

\begin{lemma}\label{mastil}
Let $\xi=(p:E\to B)$ be a fibration with cleavage $\Sigma$. If $s$ and $s'$ are simplices in $N_cE$ with the same base and the same mast, then $s=s'$. If $\Sigma$ is closed, then for all $\u b\in NB_n$ and $\u a\in (NE_{b_0})_m$ there exists a unique $m,n$-simplex $s\in N_cE$ with base $\u b$ and mast $\u a$.
\end{lemma}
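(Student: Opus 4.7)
The plan is to reconstruct the entire simplex $s$ from its base $\u b$ and mast $\u a$ by working column-by-column from left to right, using the defining property of the cleavage for the horizontal arrows and the universal property \ref{upc} of cartesian arrows for the vertical ones.

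For the uniqueness half, fix a cleaved simplex $s$ with base $\u b=(b_0\to\dots\to b_n)$ and mast $\u a=(e_{00}\to e_{10}\to\dots\to e_{m0})$, write $e_{ij}=s_0(i,j)$ for the grid entries and $\alpha_{ij}:e_{ij}\to e_{i+1,j}$ for the vertical arrows. I will show by induction on $j$ that all entries and verticals are forced by the pair $(\u b,\u a)$. The horizontal arrow $e_{ij}\to e_{i,j+1}$ is the image under the cleaved map $s$ of $(\id,j\to j+1)\in\fl(\ss_{m,n})$, which belongs to the cleavage of $\ss_{m,n}$, so it must lie in $\Sigma$, be over $b_j\to b_{j+1}$, and have source $e_{ij}$; the defining uniqueness property of a cleavage makes it equal $\Sigma_{e_{ij},b_j\to b_{j+1}}$, and this pins down $e_{i,j+1}$ as its codomain. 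For the vertical $\alpha_{i,j+1}$, functoriality of $s_0$ on the elementary square imposes $\alpha_{i,j+1}\circ\Sigma_{e_{ij}}=\Sigma_{e_{i+1,j}}\circ\alpha_{ij}$; applying \ref{upc} to the cartesian arrow $f=\Sigma_{e_{ij}}$ and the morphism $g=\Sigma_{e_{i+1,j}}\circ\alpha_{ij}$ with $\phi=\id_{b_{j+1}}$ yields a unique such $\alpha_{i,j+1}$.

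For the existence half under the hypothesis that $\Sigma$ is closed, I run the same recipe in reverse: starting from $(\u b,\u a)$, I define the grid by iterating the cleavage to produce the horizontal arrows and by invoking \ref{upc} to produce the vertical arrows. Every elementary square commutes by construction, rectangles commute by pasting, and $s_0$ thus extends from the generators of $[m]\times[n]$ to a well-defined functor. The remaining task is to check that $(s_0,\u b)$ is a cleaved map, which amounts to showing that the image of every horizontal arrow $(\id,j\to j')$ of $\ss_{m,n}$ lies in $\Sigma$. By functoriality this image equals the composition $\Sigma_{e_{i,j'-1}}\circ\cdots\circ\Sigma_{e_{ij}}$, and the closedness of $\Sigma$ guarantees the composition belongs to $\Sigma$ (normality handles the degenerate case $j=j'$).

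The main obstacle is exactly this last verification: without closedness there is no reason for the stepwise $\Sigma$-lifts to compose into an element of $\Sigma$, so the candidate functor built in the existence half would fail to be cleaved. Closedness of $\Sigma$ is precisely what promotes the pointwise construction to a genuine cleaved simplex, and together with the first half it shows that $s\mapsto(\text{base},\text{mast})$ is a bijection from $N_cE_{m,n}$ onto the set of compatible pairs.
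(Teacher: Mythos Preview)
Your proof is correct and follows essentially the same approach as the paper's: both recover the grid column-by-column, using the defining uniqueness of the cleavage for the horizontal arrows and the universal property of cartesian arrows for the verticals, and both identify closedness of $\Sigma$ as exactly what ensures the composite horizontals $s_0(i\to i,\,j\to k)$ remain in $\Sigma$. Your write-up is more explicit where the paper is terse (it simply says ``iterating this argument'' and ``it is not hard to see''), and your appeal to \ref{upc} with $\phi=\id_{b_{j+1}}$ is slight overkill---the basic cartesian property already suffices there---but nothing is wrong.
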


\begin{proof}
Note that $(e_{i,0}^s\to e_{i,1}^s)=(e_{i,0}^{s'}\to e_{i,1}^{s'})$ since they are arrows in $\Sigma$ over $b_0\to b_1$ with the same domain. We see that $(e_{i,j}^s\to e_{i,j+1}^s)=(e_{i,j}^{s'}\to e_{i,j+1}^{s'})$ by iterating this argument. Finally, $(e_{i,j}^s\to e_{i+1,j}^s)=(e_{i,j}^{s'}\to e_{i+1,j}^{s'})$ by the universal property of cartesian arrows. This proves the first assertion.

It is not hard to see that there exists a unique simplex $s\in N_fE$ with base $\u b$, mast $\u a$, and such that $e_{i,j}^s\to e_{i,j+1}^s\in\Sigma$ for all $i,j$. If $\Sigma$ is closed then $e_{i,j}^s\to e_{i,k}^s\in\Sigma$ for all $i,j,k$ and thus $s$ is in $N_cE$ and the second statement holds.
\end{proof}

If the fibration is splitting, then $N_cE$ satisfies a homotopy preserving property analogous to \ref{fiberwise}.

\begin{proposition}\label{cleaved fiberwise}
Let $\xi=(p:E\to B)$ and $\xi'=(p':E'\to B)$ be splitting fibrations with closed cleavages $\Sigma$ and $\Sigma'$, and let $f:E\to E'$ be a cleaved map over $B$.
%(i.e. $(f,\id_B)\in \cliv((\xi,\Sigma),(\xi',\Sigma'))$).
If $f:E_b\to E'_b$ is a weak equivalence for all object $b$ of $B$ then $f_*:N_cE\to N_cE'$ is a weak equivalence.
\end{proposition}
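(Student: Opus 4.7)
The plan is to mimic the proof of \ref{fiberwise} essentially verbatim, replacing the simplicial weak equivalence $\mu: N_fE_{\underline b}\to NE_{b_0}$ of Lemma \ref{m} with a stronger statement: when the cleavage $\Sigma$ is closed, the mast map on the cleaved nerve is actually a simplicial isomorphism. This stronger fact is already available from the second assertion of Lemma \ref{mastil}.

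First, by \ref{levelwise}, it is enough to show that for each fixed $n$ the induced map $f_*:\{m\mapsto N_cE_{m,n}\}\to\{m\mapsto N_cE'_{m,n}\}$ is a weak equivalence in $\sset$. Since horizontal faces and degeneracies (those in the $m$ direction) preserve the base of a simplex, the simplicial set $\{m\mapsto N_cE_{m,n}\}$ decomposes as a coproduct
$$\{m\mapsto N_cE_{m,n}\}=\coprod_{\u b\in NB_n} N_cE_{\u b},$$
where $N_cE_{\u b}$ consists of those $m$-simplices with base $\u b$, with vertical simplicial structure. The map $f_*$ respects this decomposition, so it suffices to treat each summand separately.

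The key step is then to observe that the mast map $\mu:N_cE_{\u b}\to NE_{b_0}$ is a simplicial isomorphism. It is a simplicial map because extracting column zero commutes with the $m$-direction face/degeneracy operations, and it is a bijection on $m$-simplices by Lemma \ref{mastil}: the closedness of $\Sigma$ guarantees that each pair (base $\u b$, mast $\u a$) determines exactly one simplex in $N_cE$. The same holds for $\xi'$. Consider the commutative square
$$\xymatrix{N_cE_{\u b} \ar[r]^{f_*}\ar[d]^{\mu}& N_cE'_{\u b}\ar[d]^{\mu} \\ NE_{b_0} \ar[r]^{f_*}& NE'_{b_0}}$$
where both vertical arrows are simplicial isomorphisms and the bottom arrow is a weak equivalence by hypothesis. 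The 3for2-property gives that the top arrow is a weak equivalence, and assembling over $\u b$ and applying \ref{levelwise} finishes the argument.

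I do not anticipate a serious obstacle: once the two uses of \ref{mastil} (existence and uniqueness of the cleaved-lift) are packaged as a simplicial isomorphism $\mu$, the rest is a direct transcription of the proof of \ref{fiberwise}. The only subtlety worth verifying carefully is that $\mu$ does commute with the vertical simplicial operators, which is transparent from the fact that those operators act only on the ``column'' index and that the mast is precisely the column over $b_0$.
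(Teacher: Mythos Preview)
Your proof is correct and is essentially the same as the paper's own argument: the paper simply states that the proof is analogous to that of \ref{fiberwise}, using the restriction $\mu:N_cE_{\u b}\to NE_{b_0}$, which by \ref{mastil} is actually an isomorphism. You have spelled out precisely this analogy.
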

\begin{proof}
This is analogous to that of \ref{fiberwise}, using the restriction $\mu:N_cE_{\u b}\to NE_{b_0}$, which is also a weak equivalence by \ref{mastil} -- actually, it is an isomorphism.
\end{proof}

The following result asserts that the cleaved nerve suffices to describe the homotopy type of the fibred nerve when the cleavage is closed.

\begin{theorem}\label{fibrado clivado}
If $\xi=(p:E\to B)$ is a splitting fibration with closed cleavage $\Sigma$, then the inclusion $i:N_cE\to N_fE$ is a weak equivalence.
\end{theorem}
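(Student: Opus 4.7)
The plan is to apply Proposition \ref{levelwise} and reduce the problem to comparing the two nerves one horizontal slice at a time. Fixing $n\gs 0$, I want to show that the map
$$i_*:\{m\mapsto N_cE_{m,n}\}\to\{m\mapsto N_fE_{m,n}\}$$
is a weak equivalence of simplicial sets. Vertical faces and degeneracies preserve the base of a simplex, so both simplicial sets decompose as coproducts indexed by $\u b\in NB_n$, exactly as in the proof of \ref{fiberwise}:
$$\{m\mapsto N_cE_{m,n}\}=\coprod_{\u b} N_cE_{\u b},\qquad \{m\mapsto N_fE_{m,n}\}=\coprod_{\u b} N_fE_{\u b},$$
and $i_*$ respects this splitting. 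So it suffices to show that $i:N_cE_{\u b}\to N_fE_{\u b}$ is a weak equivalence for each $\u b$.

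Now I would compare both of these to the nerve $NE_{b_0}$ via the mast map $\mu$. On the one hand, Lemma \ref{m} tells us that $\mu:N_fE_{\u b}\to NE_{b_0}$ is a weak equivalence. On the other hand, the second assertion of Lemma \ref{mastil}, which uses crucially that $\Sigma$ is closed, says that for each $\u a\in (NE_{b_0})_m$ there is a unique $m,n$-simplex in $N_cE$ with base $\u b$ and mast $\u a$; combined with the first assertion this makes $\mu:N_cE_{\u b}\to NE_{b_0}$ a bijection on $m$-simplices. Since the mast is defined by restriction to the column $pr_2^{-1}(0)$, it is compatible with the vertical simplicial operations, so $\mu$ is in fact an isomorphism of simplicial sets.

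Putting these facts into the commutative triangle
$$\xymatrix@R=12pt{N_cE_{\u b} \ar[rr]^{i} \ar[dr]_{\mu}^{\cong} & & N_fE_{\u b} \ar[dl]^{\mu} \\ & NE_{b_0} & }$$
the 3for2 property forces $i:N_cE_{\u b}\to N_fE_{\u b}$ to be a weak equivalence. Taking the coproduct over $\u b$ and applying \ref{levelwise} yields the theorem.

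The only real subtlety is the appeal to closedness of $\Sigma$ in upgrading the mast map on $N_cE_{\u b}$ from a ``retraction up to weak equivalence'' (as it is for $N_fE_{\u b}$) to an honest simplicial isomorphism; without closedness one cannot freely compose the distinguished cartesian arrows along the rows of an $m,n$-simplex, and the existence half of \ref{mastil} would fail. Everything else is formal bookkeeping with the decomposition by base and with Proposition \ref{levelwise}.
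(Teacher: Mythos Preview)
Your proof is correct and follows essentially the same route as the paper: reduce via Proposition~\ref{levelwise}, decompose by base $\u b$, use Lemma~\ref{mastil} to identify $\mu:N_cE_{\u b}\to NE_{b_0}$ as an isomorphism, use Lemma~\ref{m} for the fibred side, and conclude by 3for2. Your added remarks on why $\mu$ is simplicial and where closedness enters are accurate and make the argument a bit more explicit than the paper's version.
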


\begin{proof}
Again by proposition \ref{levelwise}, we only must show that for each $n$ the inclusion induces a weak equivalence $i_*:\{m\mapsto N_cE_{m,n}\}\to\{m\mapsto N_fE_{m,n}\}$.
For fixed $n$, the map $i_*$ can be written as the coproduct of
$$i_*:N_cE_{\u b}\to N_fE_{\u b}$$
where $\u b$ runs over all $n$-simplices of $NB$. The composition $\mu i_*:N_cE_{\u b}\to NE_{b_0}$ is an isomorphism by \ref{mastil}. It follows by \ref{m} and the 3-for-2-property that $i_*$ is a weak equivalence and thus the proposition.
\end{proof}

If the cleavage $\Sigma$ is not closed, then $N_cE$ and $N_fE$ do not necessarily have the same homotopy type. Let us illustrate this with an example.

\begin{example}
Let $E$ be the category obtained from the ordinal ${\bf\u 3}$ by formally inverting the arrow $2\to 3$. Note that $E$ has an initial element and hence $BE$ is contractible (cf. \ref{contractible}). We shall see $E$ as the total category of a fibration endowed with a cleavage $\Sigma$ in such a way that $N_cE$ is not contractible. Since $d(N_fE)$ and $NE$ have the same homotopy type (see \ref{thm2}), we conclude that in this example the inclusion $i:N_cE\to N_fE$ is not a weak equivalence. 

Let $B={\bf\u 2}$ and let $p:E\to B$ be the surjection which twice takes the value $2$. Clearly it is a fibration. Let $\Sigma$ be the normal cleavage which contains the arrow $0\to3$.
$$\xymatrix@R=15pt{ 0 \ar[r]^{\in\Sigma} \ar@/_/[drr]_{\in\Sigma} & 1 \ar[r]^{\in\Sigma} & 2\ar[d]\\ & & 3\ar[u]}$$
If a simplex $s\in N_cE$ is not contained in the fiber $E_2$, then its mast must be trivial. Since a simplex in $N_cE$ is determined by its mast and its base (cf. \ref{mastil}), it follows that the non-degenerate simplices of $N_cE$ are $0\to1,0\to3,1\to2 \in N_cE_{0,1}$ and some others included in the fiber $E_2$. Thus, the loop  $0\to1\to2\to3\leftarrow0$ gives a non-trivial element of $\pi_1(B_cE,0)$ and therefore $B_cE$ is not contractible.
\end{example}

\section{Relation with the classic nerve}

\subsection{The main result}

Let $\xi=(p:E\to B)$ be a fibration, and let $s=(s_0,s_1)$ be an element of $N_fE_{n,n}$. The composition $s_0\circ diag:{\bf\u n}\to E$
gives a $n$-simplex of $NE$, which we denote by 
$$k(s)=(e_{0,0}^s\to e_{1,1}^s\to\dots\to e_{n,n}^s).$$
This way we get a natural map of simplicial sets $k:d(N_fE)\to NE$
and its geometric realization $k_*:B_fE\to BE$.
We shall see that it is a weak equivalence, so the fibred nerve becomes an alternative model for the homotopy type of $E$.

We prove that $k$ is a weak equivalence first for splitting fibrations and then for any fibration.

\begin{proposition}\label{teo thomason}
Let $\xi=(p:E\to B)$ be a splitting fibration, with closed cleavage $\Sigma$.
Then the map $k|_{d(N_cE)}=ki:d(N_cE)\to NE$ is a weak equivalence.
\end{proposition}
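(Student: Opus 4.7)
The plan is to apply Proposition \ref{levelwise} after identifying both sides explicitly via the closed cleavage. By Lemma \ref{mastil}, an $n$-simplex of $d(N_cE)$ is uniquely classified by a pair $(\underline{b},\underline{e})$, where $\underline{b}\in NB_n$ is the base and $\underline{e}\in N(E_{b_0})_n$ is a chain in the fiber over $b_0$. Writing $F\colon B\to\cat$ for the strict functor determined by $\Sigma$ (so that $F(b)=E_b$ and $E$ is isomorphic to the Grothendieck construction $F\rtimes B$), the map $ki$ sends $(\underline{b},\underline{e})$ to the chain in $E$ whose $i$-th vertex is the cleavage lift $F(b_0\to b_i)(e_i)$, with $i$-th arrow given by the cartesian-then-vertical composition induced by the cleavage and the mast arrow $e_i\to e_{i+1}$.

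For each fixed $n$, the level $\{m\mapsto N_cE_{m,n}\}$ decomposes as $\coprod_{\underline{b}\in NB_n} N(E_{b_0})$, since by Lemma \ref{mastil} each $(m,n)$-simplex with base $\underline{b}$ is determined by its mast. The strategy is to package $NE$ as the diagonal of a bisimplicial set $\widehat{NE}$ constructed so as to admit a bisimplicial map $N_cE\to\widehat{NE}$ whose diagonal is $ki$, and such that for each $n$ the level $\{m\mapsto\widehat{NE}_{m,n}\}$ decomposes similarly over $\underline{b}$, with each component weakly equivalent to the fiber nerve $N(E_{b_0})$ (the argument being essentially that of Lemma \ref{m}, with the role of the weak equivalence $\mu$ played by its cleaved refinement, which is an isomorphism by Lemma \ref{mastil}). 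Once this is in place, Proposition \ref{levelwise} applies componentwise over $\underline{b}$ to conclude.

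The principal difficulty is constructing the bisimplicial set $\widehat{NE}$ together with the bisimplicial map $N_cE\to\widehat{NE}$. Naive candidates---such as taking $\widehat{NE}$ to be $NE$ constant in one bisimplicial direction---fail to admit such a map, because the face operator $d_0^h$ shifts the mast via the cleavage functor $F(b_0\to b_1)$ while $d_0^v$ deletes the initial row of the mast, and these operations produce incompatible outputs unless handled coherently. The closed cleavage provides exactly the required coherence: every arrow in $E$ factors uniquely as a cartesian arrow in $\Sigma$ followed by a vertical arrow (by Lemma \ref{upc} together with closedness), and iterating this factorization along chains gives a canonical way to exchange horizontal and vertical data. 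This coherence, combined with Lemma \ref{mastil}, is what produces the bisimplicial structure on $\widehat{NE}$ with the required diagonal and levelwise weak equivalence property, completing the argument via Proposition \ref{levelwise}.
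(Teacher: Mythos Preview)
Your proposal identifies the right ingredients (Lemma \ref{mastil}, Proposition \ref{levelwise}, the strict functor $F$ determined by $\Sigma$) but stops short of the actual proof: you never construct $\widehat{NE}$. You correctly diagnose that taking $NE$ constant in one direction does not receive a bisimplicial map from $N_cE$ with diagonal $ki$, and then assert that ``the closed cleavage provides exactly the required coherence'' to fix this---but you do not say what the bisimplicial set is, what its faces and degeneracies are, or why its diagonal is $NE$. This is the entire content of the argument, and it is missing. The obstruction you identify (the $d_0^h$ shift by $F(b_0\to b_1)$) is genuine, and the closed cleavage does not by itself dissolve it: the map $ki$ is simply not the diagonal of any bisimplicial map from $N_cE$ into a bisimplicial set whose diagonal is literally $NE$.

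The paper's proof shows how to get around this. It does not try to make $ki$ bisimplicial directly. Instead it first reduces to the associated fibration $E^p\to B$ via the very good map $s$ of \ref{very good map} (the vertical arrows in the resulting square are weak equivalences by \ref{good good} and \ref{cleaved fiberwise}), and then on $E^p$ it replaces $ki$ by a \emph{different} map $l:d(N_cE^p)\to NE^p$, exhibits an explicit simplicial homotopy $ki\simeq l$, and finally shows that $l$ \emph{is} the diagonal of a bisimplicial map $L:N_cE^p\to NE^p$ (with $NE^p$ regarded as constant in the $n$-direction). The levelwise pieces of $L$ are then weak equivalences because they are coproducts of maps $N(p(e_m)/B)\to\mathrm{pt}$, and under-categories are contractible by \ref{contractible}. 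The two key moves you are missing are thus: (i) the reduction to $E^p$, where the fibers are slice categories and hence have the extra structure needed, and (ii) the replacement of $ki$ by a homotopic map that genuinely comes from a bisimplicial map. Without something playing the role of these steps, your outline does not close.
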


\begin{proof}
(Compare with \cite[1.2]{thomason})
From \ref{very good map} we know that the cleavage $\Sigma$ induces a very good map $s:E^p\to E$ and hence a commutative square
$$\xymatrix{  d(N_c(E^p)) \ar[d]^{s_*}\ar[r]^{ki} & N(E^p)\ar[d]^{s_*} \\d(N_cE) \ar[r]^{ki} &  NE }$$
 by the naturality of $k$.
In this square the vertical arrows are weak equivalences (cf. \ref{good good}, \ref{cleaved fiberwise}), so in order to prove that the bottom arrow is a weak equivalence, by the 3-for-2-property it only remains to show that the upper arrow is one as well.
To do that, we define a map $l:d(N_cE^p)\to NE^p$, prove that there is a simplicial homotopy $ki\cong l$, and prove that $l$ is a weak equivalence.

A simplex $s=(s_0,s_1)$ of $N_cE^p_{m,n}$ is uniquely determined by its mast and its base (cf. \ref{mastil}), so it essentially consists of the following data
$$s=(e_0\to e_1\to \dots\to e_m,p(e_m)\to b_0\to b_1\to\dots\to b_n).$$
For $i=0,\dots,m$, $j=0,\dots,n$, we have $e_{i,j}^s=(e_i,p(e_i)\to b_j)$% for $0\leq i\leq m,0\leq j\leq n$
, with all the arrows induced by the sequence of above.
Given $i=0,\dots, m$ we define $e_{i,-1}^s$ as the object $(e_i,p(e_i)\to p(e_m))$ of $E^p$ induced by $s$. These new objects lay at the mast of the following simplex of $N_cE^p_{m,n+1}$ induced by $s$.
$$\tilde s=(e_0\to e_1\to\dots\to e_m, p(e_m)\xto{\id} p(e_m)\to b_0\to b_1\to\dots\to b_n)$$
Using $\tilde s$ we define $l:d(N_cE^p)\to NE^p$ by
%\begin{align*}
%ki(s)&=(e_{0,1}\to_{\tilde s}e_{1,2}\to_{\tilde s}\dots\to_{\tilde s}e_{n,n+1})\\
$$l(s)=(e_{0,-1}^s\to e_{1,-1}^s\to\dots\to e_{n,-1}^s).$$
%\end{align*}
In the same fashion, the homotopy $h:d(N_cE^p)\times I\to NE^p$ is given by
$$h(s,t)=(e_{0,-1}^s\to\dots\to e_{i-1,-1}^s\to e_{i,i}^s\to\dots\to e_{n,n}^s)$$
where $s\in N_cE^p_{n,n}$, $t\in I_n$, $h(s,t)_j=e_{j,-1}^s$ if $t(j)=0$ and $h(s,t)_j=e_{j,j}^s$ if $t(j)=1$.
One verifies that $h$ is a map, that $h(s,0)=l(s)$ and that $h(s,1)=ki(s)$.

Finally, let us prove that $l$ is a weak equivalence. We regard $NE^p$ as a bisimplicial set constant in direction $n$, so $NE^p_{m,n}=NE^p_m$. The map $l$ is the diagonalization of a bisimplicial map $L:N_cE^p\to NE^p$, defined with the same formula than $l$. The $m$-th component $L_{m,-}$ of $L$ can be identified with the coproduct
$$\coprod_{e_0\to\dots\to e_m}N(p(e_m)/B)\to\coprod_{e_0\to\dots\to e_m}{\rm pt}$$
which is a weak equivalence because $p(e_m)/B$ has an initial element and therefore is contractible (\ref{contractible}). The map $L$ is a weak equivalence by \ref{levelwise} and thus the result.
\end{proof}

\begin{corollary}\label{particular}
If $\xi=(p:E\to B)$ is a splitting fibration, then $k:d(N_fE)\to NE$ is a weak equivalence.
\end{corollary}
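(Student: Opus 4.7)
The plan is to reduce immediately to the two previous results by the 3for2-property. Since $\xi$ is a splitting fibration, we can choose a closed cleavage $\Sigma$ on $E$. Then the inclusion $i:N_cE\to N_fE$ of bisimplicial sets is defined, and taking diagonals gives a commutative triangle in $\sset$
$$\xymatrix@R=10pt{d(N_cE) \ar[rr]^{d(i)} \ar[dr]_{ki} & & d(N_fE) \ar[dl]^k \\ & NE & }$$
(the commutativity is essentially by construction: on an $n,n$-simplex $s$ of $N_cE$, both composites send $s$ to $s_0\circ diag$).

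Now by Theorem \ref{fibrado clivado}, $i:N_cE\to N_fE$ is a weak equivalence of bisimplicial sets, which by definition means that $d(i)$ is a weak equivalence in $\sset$. Proposition \ref{teo thomason} tells us precisely that $ki:d(N_cE)\to NE$ is a weak equivalence in $\sset$. Applying the 3for2-property to the triangle above, we conclude that $k:d(N_fE)\to NE$ is also a weak equivalence.

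There is no real obstacle here: the whole content has already been packaged into \ref{fibrado clivado} and \ref{teo thomason}, and the corollary just stitches them together. One only needs to note that the triangle genuinely commutes (which is immediate from the definitions of $i$, $k$, and the diagonal), and that a weak equivalence of bisimplicial sets induces a weak equivalence on diagonals (which holds by the very definition given in Section~1).
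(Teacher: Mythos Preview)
Your proof is correct and is exactly the argument the paper has in mind: the paper's own proof reads simply ``Fix a closed cleavage $\Sigma$ and then use \ref{fibrado clivado} and \ref{teo thomason},'' and you have spelled out precisely that 3for2 triangle.
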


\begin{proof}
Fix a closed cleavage $\Sigma$ and then use \ref{fibrado clivado} and \ref{teo thomason}.
\end{proof}

Now we extend \ref{particular} to a non-necessarily splitting fibration. %$\xi=(p:E\to B)$.

\begin{theorem}\label{thm2}
If $\xi=(p:E\to B)$ is a fibration, then the map $k:d(N_fE)\to NE$ is a weak equivalence.
\end{theorem}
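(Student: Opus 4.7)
The plan is to reduce the general case to the splitting case already handled by Corollary \ref{particular}, via the associated fibration construction of section \ref{decomposition}.

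First I would choose any normal cleavage $\Sigma$ on $E$; by Proposition \ref{s sigma}, this produces a good map $s:E^p\to E$. Recall that the associated fibration $\pi:E^p\to B$ is splitting (with closed cleavage $\Sigma^p$), and that $(s,\id_B)$ is a fibred map $E^p\to E$ over $B$, because good maps preserve cartesian arrows by definition. By naturality of $k$ (a fibred map $(f,g):\xi\to\xi'$ sends the diagonal of a bi-simplex $s_0:[m]\times[m]\to E$ to $f\circ s_0\circ\mathrm{diag}$, which is $k$ applied to the image), I obtain the commutative square
$$\xymatrix{d(N_fE^p) \ar[r]^-{k} \ar[d]_{s_*} & NE^p \ar[d]^{s_*} \\ d(N_fE) \ar[r]^-{k} & NE.}$$

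Now I would verify three of the four arrows are weak equivalences. The top horizontal $k$ is a weak equivalence by Corollary \ref{particular}, since $\pi:E^p\to B$ is splitting. The right vertical $s_*:NE^p\to NE$ is a weak equivalence because the good map $s$ is itself a weak equivalence by Lemma \ref{good good}. For the left vertical, Lemma \ref{good good} also supplies that $s:E^p_b\to E_b$ is a weak equivalence on every fiber; combined with the fact that $(s,\id_B)$ is fibred over $B$, Proposition \ref{fiberwise} yields that $s_*:N_fE^p\to N_fE$ is a weak equivalence in $\bsset$, and hence so is the induced map on diagonals $d(N_fE^p)\to d(N_fE)$. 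Applying the 3for2-property to the square then forces the bottom $k$ to be a weak equivalence, which is the desired statement.

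The real content of this argument sits in the earlier machinery: the existence and properties of the good map $s$ attached to a cleavage (Proposition \ref{s sigma} together with Lemma \ref{good good}) and the fibrewise criterion \ref{fiberwise} for detecting weak equivalences of fibred nerves. The only minor thing to check here, and the one step I would be careful about, is the naturality of $k$ with respect to fibred maps — but this is immediate from the formula $k(s)=s_0\circ\mathrm{diag}$, so no essential obstacle arises.
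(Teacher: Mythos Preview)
Your proof is correct and follows essentially the same approach as the paper: choose a cleavage to obtain a good map $s:E^p\to E$, form the commutative square relating $k$ for $E^p$ and $E$, and invoke \ref{particular}, \ref{good good}, \ref{fiberwise} together with the 3for2-property. Your write-up is in fact slightly more explicit than the paper's in justifying the naturality of $k$ and the fibrewise hypothesis needed for \ref{fiberwise}.
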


\begin{proof}
Let $\Sigma$ be a cleavage of $\xi$. The good map $s:E^p\to E$ induced by $\Sigma$  (cf. \ref{s sigma}) gives a commutative square
$$\xymatrix{  d(N_fE^p) \ar[r]_k \ar[d]_s & NE^p \ar[d]_s \\  d(N_fE) \ar[r]_k & NE}$$
Since the fibration $E^p\to B$ is always splitting, it follows from \ref{particular} that the upper arrow is a weak equivalence.
The vertical arrows are also weak equivalences (cf. \ref{good good}, \ref{fiberwise}) and then the result follows from the 3-for-2-property.
\end{proof}

\begin{example}
The surjection $s:{\bf\u 2}\to{\bf\u 1}$ which takes the value $1$ twice is a fibration.
Down below we show the spaces $B_fE$ and $BE$. The map $k$ is in this case the obvious inclusion.
$$\centerline{
\setlength{\unitlength}{1cm}
\begin{picture}(6,2)
\thicklines
\put(0,0){\line(1,0)2}
\put(2,0){\line(0,1)2}
\put(4,0){\line(1,0)2}
\put(4,0){\line(1,1)2}
\put(6,0){\line(0,1)2}
\thinlines
\put(2.5,1){\vector(1, 0)1}
\put(4.4,1.6){$BE$}\put(3,1.1){$k$}\put(0.4,1.6){$B_fE$}
\put(-.2,-.2){$0$}\put(2.1,-.2){$1$}\put(2.1,1.8){$2$}
\put(3.8,-.2){$0$}\put(6.1,-.2){$1$}\put(6.1,1.8){$2$}
\put(4.2,0){\line(0,1){.2}}
\put(4.4,0){\line(0,1){.4}}
\put(4.6,0){\line(0,1){.6}}
\put(4.8,0){\line(0,1){.8}}
\put(5,0){\line(0,1){1}}
\put(5.2,0){\line(0,1){1.2}}
\put(5.4,0){\line(0,1){1.4}}
\put(5.6,0){\line(0,1){1.6}}
\put(5.8,0){\line(0,1){1.8}}
\end{picture}}$$
Even when this example is quite simple, it is useful to understand some of the differences between the two constructions. Many of the diagonal arrows in the total category do not provide relevant homotopy information, and the fibred nerve omits them.
\end{example}

The cleaved nerve is smaller than the fibred nerve, and therefore a more effective codification of the homotopy type of the total category. On the other hand, it only works when the fibration is splitting, while the fibred nerve is useful for any fibration.

\subsection{Quillen's Theorem A and its relative version}

Quillen's Theorem A states sufficient conditions for a functor to be a weak equivalence. It was proved to be very useful not only in the work of Quillen but also in many other situations. We derive it here from our framework.

\medskip

The good behaviour of fibred nerve with respect to homotopy (cf. \ref{fiberwise}) together with theorem \ref{thm2} gives the following result.

\begin{proposition}
If $f:E\to E'$ is a fibred map over $B$ such that $f:E_b\to E'_b$ is a weak equivalence for all object $b$ of $B$, then $f$ is a weak equivalence.
\end{proposition}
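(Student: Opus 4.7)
The plan is to combine the two main results already established: Proposition \ref{fiberwise} (the fibred nerve preserves fiberwise weak equivalences) and Theorem \ref{thm2} (the fibred nerve has the homotopy type of the total category).

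First, I would apply Proposition \ref{fiberwise} to the fibred map $f:E\to E'$ over $B$. The hypothesis that $f\colon E_b\to E'_b$ is a weak equivalence for every $b\in\ob(B)$ is exactly what is needed, so I obtain that the induced map $f_*\colon N_fE\to N_fE'$ is a weak equivalence in $\bsset$, which means that $f_*\colon d(N_fE)\to d(N_fE')$ is a weak equivalence in $\sset$.

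Next, I would use the naturality of the comparison map $k$ from Section 4.1 with respect to fibred maps. This gives a commutative square
$$\xymatrix{d(N_fE) \ar[r]^{k}\ar[d]^{f_*} & NE \ar[d]^{Nf} \\ d(N_fE') \ar[r]^{k} & NE'}$$
in $\sset$. By Theorem \ref{thm2} applied to each of the fibrations $p$ and $p'$, the two horizontal arrows are weak equivalences, and the left vertical arrow is a weak equivalence by the previous paragraph. The 3for2-property then forces $Nf\colon NE\to NE'$ to be a weak equivalence, which by definition means that $f\colon E\to E'$ is a weak equivalence in $\cat$.

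There is no real obstacle here; the statement is essentially a formal corollary of \ref{fiberwise} and \ref{thm2}. The only point to be slightly careful about is verifying that $k$ is indeed natural with respect to fibred maps (so that the square above commutes), but this is immediate from its definition as $s\mapsto s_0\circ\mathrm{diag}$, since both horizontal and vertical structure of a simplex in $N_fE$ is transported functorially by any fibred map.
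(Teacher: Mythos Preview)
Your proposal is correct and follows exactly the approach the paper indicates: the proposition is stated immediately after the sentence ``The well-behaviour of fibred nerve with respect to homotopy (cf.~\ref{fiberwise}) together with theorem~\ref{thm2} gives the following result,'' and no further proof is given. Your argument spells out precisely this combination, including the naturality of $k$ and the 3for2-property, so there is nothing to add.
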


We deduce both Theorem A and its relative version from this proposition.

\begin{corollary}[Relative Quillen's Theorem A]
Let $u:A\to B$ and $u':A'\to B$ be small categories over $B$. If $f:A\to A'$ is a map over $B$ such that the induced map $A/b\to A'/b$ is a weak equivalence for all $b\in\ob(B)$, then $f$ is a weak equivalence.
\end{corollary}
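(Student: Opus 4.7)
The plan is to reduce to the preceding proposition by replacing $u$ and $u'$ with their associated fibrations from section \ref{decomposition}. Write $\pi:E^u\to B$ and $\pi':E^{u'}\to B$ for these splitting fibrations. The map $f:A\to A'$ satisfies $u'f=u$, so it induces functorially a map
$$\tilde f:E^u\to E^{u'},\qquad (a,\,u(a)\to b)\mapsto(f(a),\,u'(f(a))=u(a)\to b),$$
which is clearly a functor over $B$ (i.e.\ $\pi'\tilde f=\pi$).

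First I would check that $\tilde f$ is a fibred map: the cartesian arrows of $\pi$ (those in $\Sigma^u$) are precisely the pairs $(\id_a,\phi)$, and $\tilde f(\id_a,\phi)=(\id_{f(a)},\phi)\in\Sigma^{u'}$, hence $\tilde f$ preserves cartesian arrows. Next I would identify the restriction to fibers: under the canonical isomorphism $E^u_b\cong A/b$ recalled in section \ref{decomposition}, the induced map $\tilde f:E^u_b\to E^{u'}_b$ becomes the given map $A/b\to A'/b$, which is a weak equivalence by hypothesis.

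Having these, the unnumbered proposition immediately above (which combines \ref{fiberwise} with \ref{thm2}) applies to $\tilde f$ and yields that $\tilde f:E^u\to E^{u'}$ is a weak equivalence. Finally I would invoke the commutative square
$$\xymatrix{A \ar[r]^f \ar[d]_i & A' \ar[d]^i \\ E^u \ar[r]^{\tilde f} & E^{u'}}$$
where the two vertical arrows are weak equivalences by lemma \ref{replacement}. By the 3for2-property, $f$ is a weak equivalence, as claimed.

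There is no real obstacle here; the only thing that requires a line of verification is the identification of $\tilde f$ restricted to the fiber $E^u_b\cong A/b$ with the original map $A/b\to A'/b$, which is immediate from the explicit formula for $\tilde f$ and the description of the isomorphism $E^u_b\cong A/b$ given at the end of section \ref{decomposition}.
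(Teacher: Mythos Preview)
Your proof is correct and follows the same route as the paper: pass to the associated fibrations $E^u,E^{u'}$, note that the induced map $\tilde f$ is fibred over $B$ and restricts to the given weak equivalences $A/b\to A'/b$ on fibers, apply the preceding proposition, and finish with the square involving the inclusions $i$ and the 3for2 property. One small slip: the cartesian arrows of $\pi:E^u\to B$ are not only those in $\Sigma^u$ but all pairs $(\alpha,\phi)$ with $\alpha$ an isomorphism in $A$; since functors preserve isomorphisms this does not affect your argument, and $\tilde f$ indeed preserves all cartesian arrows.
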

\begin{proof}
Consider the following commutative square of categories over $B$, where $E^u$ and $E^{u'}$ are the associated fibrations for $u$ and $u'$, and the bottom arrow is induced by $f$ in a natural way.
$$\xymatrix{A \ar[r]^f \ar[d]_i& A'\ar[d]_i \\ E^u \ar[r]^{f_*}& E^{u'}}$$
Since the actual fiber $E^u_b$ identifies with the homotopy fiber $A/b$, the last proposition asserts that the bottom arrow is a weak equivalence, and the vertical ones are also weak equivalences by \ref{replacement}. The result follows from this and the 3-for-2-property.
\end{proof}

\begin{corollary}[Quillen's Theorem A]\label{thm a}
A map $u:A\to B$ between small categories whose homotopy fibers $A/b$ are contractible is a weak equivalence.
\end{corollary}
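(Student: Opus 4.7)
The plan is to derive the absolute Quillen's Theorem A as a direct consequence of the Relative Theorem A just established. The idea is to interpret the hypothesis on $u:A\to B$ in such a way that the comparison target is the identity $\id_B:B\to B$, whose homotopy fibers are tautologically nice.

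First I would regard both $u:A\to B$ and $\id_B:B\to B$ as objects of $\cat/B$, and view $u$ itself as the morphism $f=u$ from the first to the second in $\cat/B$ (indeed, $\id_B\circ u=u$ trivially commutes). The homotopy fibers of $\id_B$ are the slice categories $B/b$; each of these has $\id_b$ as a terminal object and is therefore contractible by Lemma \ref{contractible}. By hypothesis, each homotopy fiber $A/b$ of $u$ is contractible as well.

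Next I would check that the map $A/b\to B/b$ induced by $f=u$ (sending $(a,\phi:u(a)\to b)$ to $\phi\in\ob(B/b)$) is a weak equivalence for every $b$. This step is immediate: both categories are contractible, so any functor between them induces a map of CW-complexes both of which are homotopy equivalent to a point, which is automatically a weak homotopy equivalence.

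Having verified the hypotheses of the Relative Quillen's Theorem A, I would apply it to conclude that $f=u:A\to B$ is a weak equivalence. The main conceptual step, rather than a technical obstacle, is recognizing that the absolute theorem is nothing but the relative theorem applied against the identity of $B$; once this dictionary is in place there are no computations to grind through.
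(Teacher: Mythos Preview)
Your proof is correct and follows essentially the same approach as the paper, which simply says to take $u'=\id_B$ in the relative version. You have spelled out the details (contractibility of $B/b$ via its terminal object, and that any map between contractible categories is a weak equivalence) that the paper leaves implicit.
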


\begin{proof}
Take $u'=id_B$ in the relative version.
\end{proof}

\subsection{Fibred nerve, cleaved nerve and the codiagonal construction}

In \cite{am} the following construction is introduced.
Given $K$ a bisimplicial set, its {\em codiagonal} (or {\em bar construction}) is the simplicial set $\nabla(K)$ whose $n$-simplices are
$$\nabla(K)_n= \left\{ (x_0,x_1,\dots,x_n) : x_i\in K_{i,n-i},\ d_0^hx_i=d_{i+1}^vx_{i+1}\text{ for }0\leq i<n\right\}$$
and whose faces and degeneracies are
$$d_i(x_0,\dots,x_n)=(d_i^hx_0,d_{i-1}^hx_1,\dots,d_1^hx_{i-1},d_i^vx_{i+1},\dots,d_i^vx_n)$$ and 
$$s_j(x_0,\dots,x_n)=(s_j^hx_0,s_{j-1}^hx_1,\dots,s_0^hx_j,s_j^vx_j,\dots,s_j^vx_n).$$
There is a natural weak equivalence $\theta:d(K)\to\nabla(K)$ defined as follows,
$$\theta(x)=((d_1^v)^n x,(d_2^v)^{n-1}d_0^h x,\dots,(d_{i+1}^v)^{n-i}(d_0^h)^i x,\dots,(d_0^h)^n x)$$
where $x$ is a $n$-simplex of $d(K)$ (cf. \cite{cr}).

In the case of the fibred nerve, both the codiagonal $\nabla N_fE$ and the map $\theta$ can be described in terms of {\em singular functors} of fibrations. 
We shall give these description, and prove that for a splitting fibration there is an isomorphism between the codiagonal of the cleaved nerve and the classic nerve of the total category.

\medskip

Let $T_n$ be the full subcategory of ${\bf\u n}\times{\bf\u n}$ whose objects are the pairs $(i,j)$ satisfying $i\leq j$.
The restriction $pr_2|_{T_n}:T_n\to {\bf\u n}$ is a fibration as one can easily check.
This way we get a covariant functor $T:\Delta\to\fib$, ${\bf\u n}\mapsto T_n$, as the restriction of $\square\circ diag$.

\begin{proposition}\label{codiag1}
Let $\xi=(p:E\to B)$ be a fibration. Then there is a canonical isomorphism of simplicial sets
$$(\nabla N_fE)_n\cong\fib(T_n,\xi)$$
where the right hand side is the {\em singular functor} induced by $T:\Delta\to\fib$.
Under this isomorphisms, the map $\theta$ is identified with the restriction $s\mapsto s|_{T_n}$.
\end{proposition}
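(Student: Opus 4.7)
The plan is to decompose $T_n$ as a union of subcategories isomorphic to the squares $\ss_{i,n-i}$, and to match each piece with the corresponding component of the codiagonal.

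For $0 \leq i \leq n$, let $T_n^{(i)} \subset T_n$ be the full subcategory on objects $(a,b)$ with $a \leq i \leq b$. The bijection $(a,b) \mapsto (a,b-i)$ identifies $T_n^{(i)}$ with $[i]\times[n-i]$, and under this identification the restriction of $pr_2$ becomes the projection $\ss_{i,n-i}$ followed by the inclusion $\{i,\ldots,n\}\hookrightarrow[n]$. A direct check using the universal property shows that the cartesian arrows of $T_n$ are exactly those $(a,b)\to(a,b')$ preserving the first coordinate, and these coincide with the cartesian arrows of each $T_n^{(i)}$ that contains them. Moreover, $T_n^{(i)}\cap T_n^{(i+1)}=\{(a,b):a\leq i,\,b\geq i+1\}$ is isomorphic to $[i]\times[n-i-1]$; its inclusion into $T_n^{(i)}$ is the horizontal face $d_0^h$ (drop index $0$ in the second coordinate), and its inclusion into $T_n^{(i+1)}$ is the vertical face $d_{i+1}^v$ (drop index $i+1$ in the first coordinate).

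Define $\Phi:\hom_{fib}(T_n,\xi)\to(\nabla N_fE)_n$ by $\Phi(f)=(f|_{T_n^{(0)}},\ldots,f|_{T_n^{(n)}})$. Each restriction is a fibred map $\ss_{i,n-i}\to\xi$ by the preceding, and the matching condition $d_0^h x_i=d_{i+1}^v x_{i+1}$ is immediate since both sides equal $f|_{T_n^{(i)}\cap T_n^{(i+1)}}$. For the inverse $\Psi$, given a compatible family $(x_0,\ldots,x_n)$, set $f(a,b)=x_i(a,b-i)$ for any $i\in[a,b]$; the matching, propagated inductively on the gap, makes this independent of $i$. Since $T_n$ is a poset generated by the atomic arrows $(a,b)\to(a+1,b)$ and $(a,b)\to(a,b+1)$ modulo the commuting-square relations, and every atomic arrow and every such relation lies inside some $T_n^{(i)}$, one extends $f$ to a functor by defining it on atomic arrows via any containing $x_i$ (consistency on overlaps again following from matching) and then by composition; functoriality reduces to the fact that each square relation is interpreted inside some functor $x_i$. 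A cartesian arrow $(a,b)\to(a,b')$ lies in $T_n^{(a)}$ and so is sent by $x_a$ to a composition of cartesian arrows of $E$, hence to a cartesian arrow; thus $f$ is fibred.

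The maps $\Phi$ and $\Psi$ are mutually inverse by construction, and naturality in $[n]$, i.e.\ compatibility with the cosimplicial structure of $T_\bullet:\Delta\to\fib$ on one side and the combinatorics defining $\nabla$ on the other, unwinds directly. For the identification with $\theta$: given $s\in N_fE_{n,n}=\hom_{fib}(\ss_{n,n},\xi)$, the iterated face $(d_{i+1}^v)^{n-i}(d_0^h)^i s$ unwinds to the restriction of $s:[n]\times[n]\to E$ to the subgrid $[i]\times\{i,\ldots,n\}$, which under the identifications above is precisely $T_n^{(i)}\subset T_n\subset[n]\times[n]$. Therefore $\theta(s)=(s|_{T_n^{(0)}},\ldots,s|_{T_n^{(n)}})=\Phi(s|_{T_n})$, and $\theta$ coincides with the restriction $s\mapsto s|_{T_n}$ under $\Phi$.

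The main subtlety is ensuring that $\Psi$ genuinely produces a functor on all of $T_n$: arrows $(a,b)\to(a',b')$ with $a'>b$ are not contained in any single $T_n^{(i)}$, so $f$ on them must be reconstructed via compositions of atomic arrows lying in different pieces. The poset structure of $T_n$ is what makes this coherent, since once $f$ is fixed consistently on atomic arrows, the functoriality on any composite reduces to the commutativity of the defining square relations, each of which lives inside a single $x_i$.
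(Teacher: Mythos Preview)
Your proof is correct and follows essentially the same approach as the paper: the subcategories $T_n^{(i)}$ are exactly the images of the paper's maps $\alpha^i:\ss_{i,n-i}\to T_n$, $\alpha^i_0(a,j)=(a,j+i)$, and your $\Phi$ is the paper's $\lambda$. You are in fact more careful than the paper about the inverse---the paper simply asserts that the compatibility conditions ensure a simplex of $\nabla N_fE$ comes from a map $T_n\to\xi$, whereas you correctly isolate the subtlety that arrows with $a'>b$ must be rebuilt from atomic pieces and explain why the poset structure of $T_n$ makes this coherent.
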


\begin{proof}
Let $S$ be the simplicial set $n\mapsto \fib(T_n,\xi)$.
For $k=0,\dots,n$ let $\alpha^k=(\alpha^k_0,\alpha^k_1):\ss_{k,n-k}\to T_n$ be the fibred map satisfying $\alpha^k_0(i,j)=(i,j+k)$ for all $(i,j)\in\ob(\ss_{k,n-k})$.
%In $T_n$ there is at most one arrow between two given objects. Hence two functors $a,b:C\to T_n$ that agree in the objects must be equal.
We define $\lambda:S\to\nabla N_fE$ by mapping an $n$-simplex $x:T_n\to\xi$ to
$\lambda(x)=(x\alpha^0,x\alpha^1,\dots,x\alpha^n)$.
It is straightforward to check that $\lambda$ is well defined, i.e. the coordinates of $\lambda(x)$ satisfy the compatibility conditions of the codiagonal, and that $\lambda$ respects the faces and degeneracies.

To see that $\lambda$ is actually an isomorphism, we remark that a simplex 
$s\in N_fE_{m,n}$ can be presented as an array of $m\times n$ commutative little squares of $E$
$$\xymatrix@R=20pt@C=10pt{e_{i,j}^s \ar[r] \ar[d] & e_{i,j+1}^s\ar[d]\\
			e_{i+1,j}^s \ar[r] & e_{i+1,j+1}^s}$$
on which the vertical arrows are over identities and the horizontal ones are cartesian arrows.
If $y_k\in N_fE_{k,n-k}$, $k=0,\dots,n$, the equation $d_0^h y_k=d_{i+1}^v y_{k+1}$ says that the array of $k\times (n-k-1)$ little squares obtained from $y_k$ by deleting the first column equals the array obtained from $y_{k+1}$ by deleting the last row.

%is uniquely determined by the following data: the vertical arrows $e_{i,0}^s\to e_{i+1,0}^s$ ($0\leq i\leq k$) and the cartesian arrows $e_{i,j}^s\to e_{i,j+1}^s$ ($0\leq i\leq k,0\leq j\leq n-k-1$).

%In a similar fashion, a simplex $x\in S_n$ is the same as to give the vertical arrows $e_{i,i+1}^x\to e_{i+1,i+1}^x$ ($0\leq i\leq n-1$) and the cartesian arrows $e_{i,j}^x\to e_{i,j+1}^x$ ($0\leq i\leq j \leq n-1$).

It is clear from these descriptions that a simplex $x\in S_n$ identifies with a sequence $(y_0,\dots,y_n)$, $y_k\in N_fE_{k,n-k}$, under the compatibility conditions that impose the codiagonal.
%In fact, the equation $d_0^h y_k=d_{i+1}^v y_{k+1}$ translates into the identities 
%$(e_{i,j}^{y_k}\to e_{i,j+1}^{y_k})=
%(e_{i,j-1}^{y_{k+1}}\to e_{i,j}^{y_{k+1}})$
%
%
%
%To see that $\lambda$ is actually an isomorphism, we prove that it is injective and suryective. If $x\neq x':T_n\to\xi$, then $x$ and $x'$ must differ in some arrow $(i,j)\to(i,j+1)$ or $(i,j)\to(i+1,j)$ since every arrow of $T_n$ can be expressed as a composition of arrows of this type. Then $\lambda(x)\neq\lambda(x')$ since every arrow $(i,j)\to(i,j+1)$ or $(i,j)\to(i+1,j)$ is in the image of some $\alpha^k:\ss_{k,n-k}\to T_n$.
%
%On the other hand, the compatibility conditions among the coordinates of a simplex $y\in\nabla N_fE$ ensure that it comes from a map $x:T_n\to\xi$, which proves that $\lambda$ is surjective and thus the result.%More precisely, the category $T_n$ is the union/colimit of the categories $\ss_{k,n-k}$ under the identifications of $$
%%is the colimit in $\cat$ of the following diagram
%%$$\xymatrix@R=10pt@C=2pt{
%% & \ss_{0,n-1}\ar[dl] \ar[dr]& & \ss_{1,n-2}\ar[dl] & & \dots & & \ss_{n-1,0}\ar[dl] \ar[dr] & \\
%%\ss_{0,n} & & \ss_{1,n-1} & & \dots & & \ss_{n-1,1} & & \ss_{n,0}}$$
\end{proof}

A similar statement holds for the cleaved nerve. Its proof is essentially that of \ref{codiag1}.
Note that $T_n$ inherits the closed cleavage from $\ss_{n,n}$ and hence $T$ can be considered as a functor $\Delta\to\esc\subset\cliv$.

\begin{proposition}\label{codiag2}
Let $\xi=(p:E\to B)$ be a fibration with cleavage $\Sigma$. Then there is a canonical isomorphism of simplicial sets
$$(\nabla N_cE)_n\cong\cliv(T_n,(\xi,\Sigma))$$
where the right hand side is the {\em singular functor} induced by $T:\Delta\to\esc\subset\cliv$.
Under this isomorphisms, the map $\theta$ identifies with the restriction $s\mapsto s|_{T_n}$.
\end{proposition}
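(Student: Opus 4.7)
The strategy is to mimic the proof of \ref{codiag1} but with cleavage-preservation checks inserted at each step. Since both $T_n$ and $\ss_{k,n-k}$ carry the canonical closed cleavage in which the cartesian arrows are precisely those of the form $(\id,\alpha)$, it suffices to verify that every morphism constructed in the fibred proof actually lies in $\cliv$.

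First I would take the same fibred maps $\alpha^k=(\alpha^k_0,\alpha^k_1):\ss_{k,n-k}\to T_n$ satisfying $\alpha^k_0(i,j)=(i,j+k)$ as in \ref{codiag1}, and check that each $\alpha^k$ is a cleaved map: it sends a cartesian arrow $(\id_i,j\to j+1)$ of $\ss_{k,n-k}$ to $(\id_i, (j+k)\to(j+k+1))$, which is in the chosen cleavage of $T_n$. Consequently, for any $x\in\hom_{cliv}(T_n,(\xi,\Sigma))$ the composition $x\alpha^k$ is a cleaved map, hence an element of $N_cE_{k,n-k}$. Then I would define
$$\lambda:\hom_{cliv}(T_n,(\xi,\Sigma))\to(\nabla N_cE)_n,\qquad \lambda(x)=(x\alpha^0,x\alpha^1,\ldots,x\alpha^n).$$
The compatibility conditions $d_0^h(x\alpha^k)=d_{k+1}^v(x\alpha^{k+1})$ follow exactly as in \ref{codiag1} from the identities $\alpha^k\circ d_0^h=\alpha^{k+1}\circ d_{k+1}^v$ of maps into $T_n$.

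Injectivity of $\lambda$ is immediate: the images of the $\alpha^k$ jointly cover every object and every generating arrow of $T_n$, so $x$ is determined by the tuple $(x\alpha^0,\ldots,x\alpha^n)$. For surjectivity, given a tuple $y=(y_0,\ldots,y_n)\in(\nabla N_cE)_n$ the compatibility $d_0^h y_k=d_{k+1}^v y_{k+1}$ allows one to assemble a well-defined functor $x:T_n\to E$ over $[n]$ exactly as in the fibred case. The key additional point is that $x$ is a cleaved map: every arrow of the cleavage of $T_n$ lies in the image of some $\alpha^k$ as an arrow of the cleavage of $\ss_{k,n-k}$, and $y_k\in N_cE$ sends such arrows into $\Sigma$; hence $x(\Sigma^{T_n})\subset\Sigma$.

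The identification of $\theta$ with the restriction $s\mapsto s|_{T_n}$ proceeds verbatim as in \ref{codiag1}, since the formula for $\theta$ on the diagonal $d(N_cE)\subset d(N_fE)$ picks out precisely the subsimplices $s\alpha^k$, and these are the components of $\lambda(s|_{T_n})$. The only real subtlety is the cleavage bookkeeping just described; once this is in place the rest of the argument is purely combinatorial and formally identical to the fibred case.
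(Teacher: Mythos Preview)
Your proposal is correct and follows exactly the approach the paper indicates: the paper does not spell out a proof of \ref{codiag2} but simply says ``Its proof is essentially that of \ref{codiag1}'', and you have carried out precisely that, inserting the cleavage-preservation checks (that each $\alpha^k$ is cleaved, and that the reconstructed $x$ sends the cleavage of $T_n$ into $\Sigma$) at the appropriate points.
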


Given $\xi=(p:E\to B)$ a fibration endowed with a cleavage $\Sigma$, we have the following diagram of simplicial sets, where $\bar k$ is defined below.
$$\xymatrix@R=5pt{
dN_cE \ar[r]^i \ar[dd]^\theta & dN_fE \ar[dr]^k \ar[dd]^\theta &  \\
 &  & NE \\
\nabla N_cE \ar[r]^i & \nabla N_fE  \ar[ru]^{\bar k} & }$$
If $x=(x_0,x_1)\in\fib(T_n,E)$, then the composition $x_0\circ diag:{\bf\u n}\to E$ defines a simplex in $NE_n$. Under the identification of \ref{codiag1} this gives the map $\bar k:\nabla N_fE\to NE$, $\bar k(x)=x_0\circ diag$.

The following shows how to recover the classic nerve of the total category of a splitting fibration from the cleaved nerve.

\begin{theorem}\label{codiag3}
If the cleavage $\Sigma$ is closed, then the map $\bar k i:\nabla N_cE\to NE$ is an isomorphism.
\end{theorem}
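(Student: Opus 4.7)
My plan is to show $\bar k i$ is a bijection in each degree $n$; naturality in $[n]$ will then follow automatically from the construction. Via the identification of Proposition \ref{codiag2}, an $n$-simplex of $\nabla N_c E$ is a cleaved map $x : T_n \to (\xi, \Sigma)$, and $\bar k i$ sends $x$ to the diagonal chain $x_0(0,0) \to x_0(1,1) \to \dots \to x_0(n,n)$ in $E$. Thus the task reduces to: for every chain $\underline e = (e_0 \to \dots \to e_n)$ in $E$, produce a unique cleaved $x$ with $x_0 \circ \mathrm{diag} = \underline e$.

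For uniqueness, the base $x_1 = p \circ x_0 \circ \mathrm{diag}$ is forced. For $i < j$, the horizontal arrow $(i,i) \to (i,j)$ in $T_n$ lies in $\Sigma_{T_n}$, so its image under $x_0$ must be the unique element of $\Sigma$ with domain $e_i$ over $x_1(i) \to x_1(j)$; this pins down both $x_0(i,j)$ and $x_0((i,i) \to (i,j))$. The remaining cartesian edges $(i,j) \to (i,k)$ and the fibre edges $(i,j) \to (k,j)$ (over identities in $[n]$) are then uniquely forced by the universal property \ref{upc} of the cartesian arrow $x_0((i,i) \to (i,j))$, since they must fit into commutative squares with three sides already determined. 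Because every arrow in $T_n$ factors uniquely as a cartesian one followed by one over identity, $x_0$ is globally determined.

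For existence, I would adopt the same formulas as a definition of $x$: set $x_1(i) = p(e_i)$, $x_0(i,i) = e_i$, take $x_0((i,i) \to (i,j))$ to be the cleaved lift $\Sigma_{e_i, x_1(i) \to x_1(j)}$, and define the cartesian and fibre edges above by the unique factorizations provided by \ref{upc}. Closedness of $\Sigma$, via Lemma \ref{coro}, guarantees that each resulting cartesian arrow remains in $\Sigma$, so the produced map is genuinely cleaved; naturality in $[n]$ is then immediate because the recipe uses only $\Sigma$ and \ref{upc}, both natural under simplicial operators.

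The step I expect to be the main obstacle is verifying functoriality of the constructed $x_0$: two different factorizations of the same arrow of $T_n$ into cartesian and fibre pieces must yield the same composite in $E$. I anticipate this reducing to repeated application of the uniqueness clause of \ref{upc} — any two such factorizations give two factorizations of the same morphism through a common cartesian arrow and must therefore coincide — with Lemma \ref{coro} ensuring the bookkeeping stays inside the closed cleavage $\Sigma$.
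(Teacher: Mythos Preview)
Your proposal is correct and follows essentially the same approach as the paper: both identify $(\nabla N_cE)_n$ with cleaved maps $T_n\to(\xi,\Sigma)$ via \ref{codiag2}, argue that such a map is uniquely determined by its diagonal because horizontal arrows are forced into $\Sigma$ and the remaining arrows are pinned down by \ref{upc}, and then invoke closedness of $\Sigma$ to see that the resulting extension is genuinely cleaved. The paper is terser---it simply asserts the existence of ``the unique functor $s:T_n\to E$'' with the required properties---whereas you spell out the functoriality verification, but the underlying argument is the same.
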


\begin{proof}
The proof is similar to that of lemma \ref{mastil}. To see that $\bar k i$ is injective, consider a simplex $x\in NE_n$, view $x:{\bf\u n}\to E$ as defined over the diagonal of $\ss_{n,n}$ and note that an extension $s:T_n\to E$ of $x$ is necessarily unique: The horizontal arrows must belong to the cleavage, and the vertical ones are uniquely determined by the universal property of cartesian arrows.

If $\Sigma$ is closed, then the unique functor $s:T_n\to E$ such that $s \circ diag = x$ and $e_{i,j}^s\to e_{i,j+1}^s\in\Sigma$ determines a cleaved map $T_n\to E$ and hence a simplex $s\in(\nabla N_cE)_n$ satisfying $\bar k i(s)=x$. Thus the surjectivity.
\end{proof}

\section{Other examples and applications}

\subsection{Homotopy colimits}
\label{Grothendieck construction}

%In categories $C$ on which a distinguished class of arrows $W$ is given, the  homotopy colimits appears as a variant for usual colimits. Roughly speaking, to compute a homotopy colimit is to paste the objects in the diagram in a way consistent with the homotopy notions induced by $W$. One possible formulation of homotopy colimits (of $I$-shaped diagrams) is to say that $D\mapsto hocolim D$ is a left adjoint to the diagonal functor 
%$$\Delta:W^{-1}C\to (\Delta W)^{-1}C^I$$
%In particular, $hocolim D$ is defined up to isomorphism in the homotopy category $(\Delta W)^{-1}C^I$. See e.g. \cite{bk} for details. 

Bousfield and Kan \cite{bk} give a construction of a representing object for the homotopy colimit of a diagram of simplicial sets. Given $Z:I\to\sset$, let $hc(Z)$ be the bisimplicial set whose $m,n$-simplices are
$$hc(Z)_{m,n}=\coprod_{i_0\to\dots\to i_n} Z(i_0)_m$$
where the coproduct runs over all simplices of dimension $n$ of $NI$, and faces and degeneracies are defined in the obvious way. Then $hc(Z)$ satisfies the homotopy universal property of homotopy colimits (cf. \cite{bk}).

In \cite{thomason} Thomason uses the Bousfield-Kan construction to describe homotopy colimits in $\cat$ in terms of the Grothendieck construction for a functor.
We recall Grothendieck construction over a functor $F:B\to\cat$, compare the Bousfield-Kan construction with the cleaved nerve and derive Thomason's theorem from this. 

\medskip

Given $F:B\to\cat$ a diagram of small categories, its {\em Grothendieck construction} is a splitting fibration $F\rtimes B\to B$ whose fibers are the values of $F$. The objects of the total category $F\rtimes B$ are pairs $(x,b)$ with $b$ an object of $B$ and $x$ and object of $F(b)$. An arrow $(f,\phi):(x,b)\to(x',b')$ in $F\rtimes B$ is a pair $\phi:b\to b'$, $f:F(\phi)(e)\to e'$. Composition is given by 
$(\psi,g)\circ(\phi,f)=(\psi\phi,gF(\psi)(f))
$. The map $F\rtimes B\to B$ is the projection, and the arrows $(\id,\phi)$ form a distinguished closed cleavage.

\begin{theorem}\label{hc}
Given $F:B\to \cat$, there is an isomorphism $N_c(F\rtimes B)\xto\sim hc(NF)$ between the cleaved nerve of the Grothendieck construction of $F$ and the Bousfield-Kan construction for homotopy colimits.
\end{theorem}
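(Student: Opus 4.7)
The plan is to exhibit an explicit bijection on $(m,n)$-simplices using the mastil lemma \ref{mastil}, and then check that this bijection commutes with all face and degeneracy operators. No deep argument is required; the content is bookkeeping once the right identification is set up.

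First, I observe that the distinguished cleavage $\Sigma=\{(\id,\phi)\}$ of $F\rtimes B\to B$ is closed, and that for each $b\in\ob(B)$ the fiber $(F\rtimes B)_b$ is canonically identified with $F(b)$. Hence by \ref{mastil}, a simplex $s\in N_c(F\rtimes B)_{m,n}$ is determined uniquely by its base $\u b=(b_0\to\cdots\to b_n)\in NB_n$ and its mast $\u x=(x_0\to\cdots\to x_m)\in N(F(b_0))_m$, and every such pair $(\u b,\u x)$ arises from a unique $s$. This produces a family of bijections
\[
\Phi_{m,n}:N_c(F\rtimes B)_{m,n}\xrightarrow{\sim}\coprod_{\u b\in NB_n}N(F(b_0))_m=hc(NF)_{m,n}.
\]

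The next step is to verify that the $\Phi_{m,n}$ are compatible with both simplicial structures. Vertical operators (in direction $m$) act only on the mast, and this is the same operation on both sides. The horizontal operators $d_j^h$ and $s_j^h$ with $j>0$ modify the chain $\u b$ at positions other than $b_0$, leaving the mast untouched, again matching the definition of $hc(NF)$. The only nontrivial compatibility is for $d_0^h$: on the $N_c$-side, this face is induced by precomposing $s_0:[m]\times[n]\to F\rtimes B$ with $\id\times d_0:[m]\times[n-1]\to[m]\times[n]$, and hence replaces $\u b$ by $(b_1\to\cdots\to b_n)$ and the mast by the column above $b_1$. Using the composition law $(\psi,g)\circ(\phi,f)=(\psi\phi,\,g\,F(\psi)(f))$ together with the form of the cleavage arrows $(\id,b_0\to b_1):(x_i,b_0)\to(F(b_0\to b_1)(x_i),b_1)$, a short diagram chase in the commutative square
\[
\xymatrix@R=10pt@C=30pt{(x_i,b_0)\ar[r]^{(g_i,\id)}\ar[d]_{(\id,b_0\to b_1)}&(x_{i+1},b_0)\ar[d]^{(\id,b_0\to b_1)}\\(F(b_0\to b_1)(x_i),b_1)\ar[r]&(F(b_0\to b_1)(x_{i+1}),b_1)}
\]
forces the new mast to be $F(b_0\to b_1)(\u x)$, which is precisely the definition of $d_0^h$ on $hc(NF)$. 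An analogous, easier check handles $s_0^h$.

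The hardest step, such as it is, is the $d_0^h$ verification above; everything else is formal. Once this is in place, the $\Phi_{m,n}$ assemble into an isomorphism $N_c(F\rtimes B)\xrightarrow{\sim}hc(NF)$ of bisimplicial sets, as claimed.
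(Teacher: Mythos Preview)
Your proof is correct and follows exactly the same approach as the paper: define the map by sending a simplex to its (mast, base) pair, invoke \ref{mastil} for bijectivity, and note that it is a morphism of bisimplicial sets. The paper's proof simply asserts the last point without justification, whereas you supply the (only nontrivial) $d_0^h$ verification explicitly; this is a welcome elaboration rather than a different argument.
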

\begin{proof}
The isomorphism $N_c(F\rtimes B)\to hc(NF)$ maps a $m,n$-simplex $s$ of $N_c(F\rtimes B)$ to the element $\u a$ in the summand indexed by $\u b$, where $\u a$ is the mast of $s$ and $\u b$ is the base of $s$. This is indeed a morphism of bisimplicial sets, and it is invertible because of \ref{mastil}.
\end{proof}

If $E$ is any fibration, then one can define a function $N_f E\to hc(NF)$ in a similar fashion. However, this function is not a morphism in general, as it does not respect the 0-th face operator.

\begin{corollary}[Thomason's theorem]\label{thm t}
The Grothendieck construction $F\rtimes B$ over a functor $F:B\to\cat$ is a representing object for the homotopy colimit of $F$.
\end{corollary}
\begin{proof}
This is a consequence of \ref{teo thomason} and \ref{hc}.
\end{proof}

\subsection{Spectral sequence of a fibration}

A bisimplicial set gives rise to a bisimplicial abelian group and hence to a bicomplex. In this section we study the spectral sequence associated to the bicomplex coming from the fibred nerve.
We recall some definitions on homology of categories from \cite{quillen}. Then we describe how a fibration gives rise to a pseudofunctor, and define the modules $H_m(F)$. Finally we state and prove theorem \ref{spectral sequence} and derive a homology version of Quillen's Theorem A as a corollary.

\medskip

Given a small category $C$, a {\em module over $C$} is a functor $A:C\to\ab$, where $\ab$ denotes the category of abelian groups.
The $m$-th {\em homology group of $C$ with coefficients in a module $A$} is defined as the $m$-th left derived functor of $\colim:\ab^C\to\ab$.
$$H_m(C,A)=\colim^m_C A$$ 
The groups $H_m(C,A)$ can be computed as the homology of the following simplicial abelian group
$$C_m(C,A)=\bigoplus_{c_0\to\dots\to c_m} A(c_0)$$
and, in the case that $A$ is morphism inverting, they agree with the homology of the classifying space $BC$ with local coefficients induced by $A$.
We write $H_m(C)$ instead of $H_m(C,A)$ when $A$ is the constant functor $\Z$. It follows that
$$H_m(C)=H_m(BC)$$
where the right side denotes the singular homology of the space $BC$.

\medskip

Let $\xi=(p:E\to B)$ be a fibration, and let $\Sigma$ be a cleavage of $\xi$. 
For each arrow $\phi:b\to b'$ in $B$ a {\em base-change functor} $\phi_*:E_b\to E_{b'}$ is defined as follows: If $e$ is an object of $E_b$, then $\phi_*(e)$ is the codomain of $\Sigma_{e,\phi}$, and if $f:e\to e'$ is an arrow of $E_b$, then $\phi_*(f)$ is the unique arrow in $E_{b'}$ such that $\phi_*(f)\circ \Sigma_{e,\phi}=\Sigma_{e',\phi}\circ f$.

Of course, $\phi_*$ depends on the cleavage, but different cleavages give rise to naturally isomorphic base-change functors, as follows from the universal property of cartesian arrows.
In the same fashion, given $\phi$, $\psi$ composable arrows of $B$, there is a natural isomorphism $\psi_*\phi_*\then(\psi\phi)_*$. The set of data
$$b\mapsto E_b \qquad \phi\mapsto \phi_* \qquad \psi_*\phi_* \then (\psi\phi)_*$$
defines a {\em pseudofunctor} $B\dasharrow\cat$ (cf. \cite{sga1}). Note that if $\Sigma$ is closed then the isomorphisms $\psi_*\phi_*\then(\psi\phi)_*$ are identities and one has a true functor $B\to\cat$.

Given $m\geq 0$, let $H_m(F):B\to\ab$ be the functor which assigns to each $b\in B$ the group $H_m(E_b)$, and to each arrow $\phi:b\to b'$ the map induced by $\phi_*$. Since isomorphic functors yields homotopic maps between the classifying spaces, the module $H_m(F)$ is well defined (i.e. is a functor) and does not depend on the cleavage $\Sigma$.

\begin{theorem}\label{spectral sequence}
There is a spectral sequence $\{X_{m,n}^r\}$ which converges to the homology of the total category $E$ and whose second sheet consists of the homology of the base with coefficients in the homology of the fibers.
$$X^2_{m,n}=H_n(B,H_m(F))\then H_{m+n}(E)$$
\end{theorem}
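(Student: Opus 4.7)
The plan is to run the first spectral sequence of the double complex associated with the bisimplicial abelian group $\mathbb{Z}[N_f E]$. Applying the free abelian group functor levelwise to $N_f E$ we obtain a bisimplicial abelian group, whose associated bicomplex $C_{m,n} = \mathbb{Z}[N_f E_{m,n}]$ (with the horizontal and vertical alternating-sum differentials) has a total complex computing $H_*(d(N_f E))$ by Eilenberg--Zilber. By theorem \ref{thm2} the diagonal $d(N_f E)$ is weakly equivalent to $NE$, so the abutment is $H_{m+n}(E)$.

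Now I would take the spectral sequence that first computes vertical (mast-direction) homology. As in the proof of \ref{fiberwise}, the horizontal faces and degeneracies preserve the base of a simplex, so for each fixed $n$ one has the decomposition
$$\{m \mapsto N_fE_{m,n}\} = \coprod_{\underline{b} \in NB_n} N_fE_{\underline{b}}.$$
Applying lemma \ref{m}, the mast map $\mu : N_fE_{\underline{b}} \to NE_{b_0}$ is a weak equivalence, hence induces an isomorphism on homology. This identifies the first page:
$$E^1_{n,m} \;=\; H_m(\{m' \mapsto N_fE_{m',n}\}) \;=\; \bigoplus_{\underline{b} = (b_0 \to \cdots \to b_n) \in NB_n} H_m(E_{b_0}),$$
which is precisely the group of $n$-chains $C_n(B, H_m(F))$ of the base with coefficients in the module $H_m(F)$.

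The main obstacle is identifying the $d^1$ differential on $E^1$ with the boundary of $C_*(B, H_m(F))$. Fix a cleavage $\Sigma$ (\ref{s sigma}) to pick representatives via the section $\nu$ of $\mu$ described in the proof of \ref{m}. For $i \geq 1$ the horizontal face $d_i^h$ leaves the first column of the array untouched, so after passing to $H_m$ via $\mu$ it becomes the identity on $H_m(E_{b_0})$ followed by the $i$-th face in $NB$; this matches the $i$-th summand of the simplicial differential of $C_*(B,H_m(F))$. For $i = 0$, $d_0^h$ drops the first column, so the new mast is the second column of the array, which by construction of $\nu$ and by definition of the base-change functor is $\phi_*(\underline{a})$ with $\phi : b_0 \to b_1$. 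Hence $d_0^h$ induces $(\phi_*)_*: H_m(E_{b_0}) \to H_m(E_{b_1})$, matching the remaining summand of the boundary. Independence of the cleavage follows from the fact, noted before the statement, that different cleavages yield naturally isomorphic base-change functors.

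It follows that $E^2_{n,m} = H_n(C_*(B,H_m(F))) = H_n(B, H_m(F))$, and the spectral sequence converges to $H_{m+n}(E)$ as required. The main subtlety is purely the identification in the third paragraph; everything else is a direct application of earlier results.
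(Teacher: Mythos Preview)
Your proof is correct and follows essentially the same route as the paper: form the bicomplex $\mathbb{Z}[N_fE]$, filter horizontally, identify the $E^1$ page via the decomposition over $NB$ and the mast equivalence $\mu$ of \ref{m}, and obtain the abutment from generalized Eilenberg--Zilber together with \ref{thm2}. Your treatment is in fact more thorough than the paper's, which simply asserts $H_nH_m(C_fE)=H_n(B,H_m(F))$ without checking the $d^1$ differential; your analysis of the horizontal faces via the section $\nu$ fills this gap correctly.
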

\begin{proof}
From the bisimplicial set $N_fE$ we construct the free bisimplicial abelian group $\Z N_fE$, and the bicomplex $C_fE$, whose $m,n$-th group equals that of $\Z N_fE$ and whose horizontal and vertical differential maps are the alternate sum of the horizontal and vertical faces, respectively.
We have that
$$C_fE_{m,n}=\bigoplus_{\u b=(b_0\to\dots\to b_n)}\Z[(N_fE_{\u b})_m]$$
where $(N_fE_{\u b})_m$ is the set of $m,n$-simplices of $N_fE$ with base $\u b$. Filtering the bicomplex $C_fE$ in the horizontal direction gives a spectral sequence
$$H_n(H_m(C_fE))\then H_{m+n}(Tot(C_fE)).$$
The first sheet of this spectral sequence is obtained by computing the vertical homology ($m$-direction) of $C_fE$.
In degree $m,n$ this is equal to
$$H_m(C_fE)_{m,n}=\bigoplus_{\u b} H_m(N_fE_{\u b})\cong\bigoplus_{\u b} H_m(E_{b_0}),$$
where the isomorphism $\cong$ is that induced by $\mu$ (cf. \ref{m}).
The second sheet of this spectral sequence is obtained by computing the horizontal homology ($n$-direction). In degree $m,n$ this is equal to
$H_nH_m(C_fE)=H_n(B,H_m(F))$.
Finally, by the generalized Eilenberg-Zilber theorem (cf. \cite[IV,2.5]{gj}) the homology of the total complex $H_{m+n}(Tot(C_fE))$ is isomorphic to the homology of the diagonal $H_{m+n}(d(\Z N_fE))$, which equals $H_{m+n}(E)$ since $d(N_fE)$ and $NE$ are homotopic (cf. \ref{thm2}). This completes the proof.
\end{proof}

Suppose now that $\xi=(p:E\to B)$ is a fibration whose fibers are homologically trivial, namely
$H_m(E_b)=0$ if $m>0$ and $H_0(E_b)=\Z$ for all objects $b$ of $B$.
If $m>0$ then the functors $H_m(F)$ are constant and equal to $0$, so the second sheet of the spectral sequence $X$ of \ref{spectral sequence} is 
$$X^2_{m,n}=\begin{cases}0 & \text{if }m>0\\ H_n(B) & \text{if }m=0\end{cases}$$
It follows that $X^\infty=X^2$ and thus the homology of $E$ is that of $B$. 
It is not hard to see that $p_*:H_n(E)\to H_n(B)$ is actually the isomorphism.

\begin{corollary}\label{homology}
If a fibration $\xi=(p:E\to B)$ is such that the fibers $E_b$ are homologically trivial, then $p_*:H_n(E)\xto\sim H_n(B)$ is an isomorphism for all $n\geq 0$.
\end{corollary}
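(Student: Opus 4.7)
The plan is to apply Theorem \ref{spectral sequence} and exploit the fact that under the hypothesis the spectral sequence collapses at the $X^2$ page.

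First I would translate the hypothesis into the language of the coefficient modules $H_m(F)$. Since $H_m(E_b)=0$ for every $m>0$ and every $b\in\ob(B)$, the module $H_m(F):B\to\ab$ is identically zero whenever $m>0$; and since $H_0(E_b)=\Z$ for every $b$, while the base-change functors $\phi_*:E_b\to E_{b'}$ induce the canonical identity $\Z\to\Z$ on $H_0$, the module $H_0(F)$ is the constant $\Z$-valued module on $B$. Substituting these values into $X^2_{m,n}=H_n(B,H_m(F))$ gives $X^2_{m,n}=0$ for all $m>0$ and $X^2_{0,n}=H_n(B,\Z)=H_n(B)$.

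Next I would observe that because $X^2$ is concentrated in the single row $m=0$, every differential $d^r$ for $r\geqslant 2$ has its source or target off the row and hence vanishes. Thus $X^{\infty}=X^2$, the induced filtration on $H_n(E)$ has a unique non-zero associated graded piece in position $(0,n)$, and we obtain an abstract isomorphism $H_n(E)\cong H_n(B)$.

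The only delicate step is to identify this isomorphism with $p_*$. I would do this by naturality: the pair $(p,\id_B)$ defines a fibred map $\xi\to\id_B$ (every arrow of $B$ is cartesian for the identity fibration, so $p$ trivially preserves cartesian arrows), which induces a map of bicomplexes $C_fE\to C_fB$ and hence a morphism from the spectral sequence of $\xi$ to that of $\id_B$. The latter also collapses with $X^2_{0,n}(\id_B)=H_n(B)$, and on the row $m=0$ the induced map $X^2_{0,n}(\xi)\to X^2_{0,n}(\id_B)$ is the identity of $H_n(B)$ (both sheets are computed from the same complex of chains on $NB$ tensored with the constant module $\Z$). Since the map of bicomplexes was induced by $p$, the map it induces on the abutments is $p_*:H_n(E)\to H_n(B)$, and the commuting square of isomorphisms forces $p_*$ to agree with the collapsed-isomorphism from the previous paragraph. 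The main obstacle is just this naturality bookkeeping; everything else is formal from Theorem \ref{spectral sequence}.
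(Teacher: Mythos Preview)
Your proposal is correct and follows essentially the same route as the paper: apply Theorem~\ref{spectral sequence}, observe that the hypothesis forces $H_m(F)=0$ for $m>0$ and $H_0(F)$ constant equal to $\Z$, so the second page is concentrated on the row $m=0$ and the spectral sequence collapses to give $H_n(E)\cong H_n(B)$. The paper leaves the identification of this isomorphism with $p_*$ as ``not hard to see'', while you spell it out via naturality along the fibred map $(p,\id_B):\xi\to\id_B$; this is a perfectly good way to justify that step.
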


\begin{corollary}[Homology version of Quillen's Theorem A]\label{homology2}
Let $u:A\to B$ be a map between small categories whose homotopy fibers $A/b$ are homologically trivial. Then $u_*:H_n(A)\xto\sim H_n(B)$ is an isomorphism for all $n\geq 0$.
\end{corollary}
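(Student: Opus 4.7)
The plan is to mimic the derivation of the original Quillen Theorem A from section 4.2, replacing the appeal to Proposition \ref{fiberwise} (which gave weak equivalences) with the homological statement Corollary \ref{homology}. Concretely, I would factor the map $u:A\to B$ through its associated fibration as in \ref{decomposition}, writing $u=\pi i$ with $i:A\to E^u$ and $\pi:E^u\to B$. Lemma \ref{replacement} tells us that $i$ is a weak equivalence, hence $i_*:H_n(A)\to H_n(E^u)$ is an isomorphism for every $n\geq 0$. So it only remains to show that $\pi_*:H_n(E^u)\to H_n(B)$ is an isomorphism as well.

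For that step, I would use the identification of fibers $E^u_b\cong A/b$ observed in section \ref{decomposition}. By hypothesis each homotopy fiber $A/b$ is homologically trivial, so each actual fiber $E^u_b$ of the fibration $\pi$ is homologically trivial. Corollary \ref{homology} then applies directly to $\pi$ and yields that $\pi_*:H_n(E^u)\xto\sim H_n(B)$ is an isomorphism. Composing the two isomorphisms $i_*$ and $\pi_*$ gives $u_*=\pi_*i_*:H_n(A)\xto\sim H_n(B)$, as required.

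There is no real obstacle here, since both ingredients (the decomposition $u=\pi i$ with $E^u_b\cong A/b$, and the spectral-sequence computation underlying \ref{homology}) have already been established in the body of the paper. The only point worth underlining is that \ref{homology} genuinely needs a \emph{fibration}, not an arbitrary functor, which is exactly what the associated-fibration construction of section \ref{decomposition} provides; this is the same device that makes the ordinary Theorem A follow from the fiberwise criterion \ref{fiberwise}.
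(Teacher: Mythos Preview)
Your proof is correct and follows exactly the same route as the paper: factor $u=\pi i$ through the associated fibration, use \ref{replacement} to get that $i_*$ is an isomorphism on homology, identify the fibers $E^u_b\cong A/b$, and apply Corollary \ref{homology} to conclude that $\pi_*$ is an isomorphism. There is nothing to add.
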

\begin{proof}
If $\pi:E^u\to B$ is the fibration associated to $u$, then its fibers are isomorphic to the homotopy fibers of $u$ and thus $\pi$ induces isomorphisms in homology by \ref{homology}. Since $u=\pi\circ i$ and $i:A\to E^u$ is a weak equivalence (cf. \ref{replacement}) the result follows.
\end{proof}

\subsection{Quillen fibrations and Theorem B}

We have seen in many examples that different fibers of a Grothendieck fibration need not have the same homotopy type. %For instance, the Grothendieck construction over a functor $F:B\to\cat$ has as fibers the values of $F$.
This remark shows that in general the map $B_fE\to B_fB$ is not a fibration, nor a quasifibration.
It is remarkable that this is the only obstruction. %, as was pointed out by Quillen \cite{quillen}.
In this section we define Quillen fibrations, discuss the monodromy action and reformulate Quillen's Theorem B in terms of the fibred nerve.

\medskip

We say that a fibration $\xi=(p:E\to B)$ is a {\em Quillen fibration} if for each arrow $\phi:b\to b'$ in $B$ the base-change functor $\phi_*:E_b\to E_{b'}$ is a weak equivalence. Note that this definition does not depend on the cleavage, for two base-change functors over $\phi$ must be homotopic.

In a Quillen fibration the induced functor $B\to[\top]$, $b\mapsto BE_b$ is morphism inverting, therefore it induces a map $$\pi_1(B)\to[\top].$$
Here $\pi_1(B)$ denotes the fundamental groupoid of $B$, namely the groupoid obtained by formally inverting all the arrows of $B$, and $[\top]$ denotes the category of topological spaces and homotopy classes of continuous maps. We call $\pi_1(B)\to[\top]$ the {\em monodromy action} of the fibration.
The monodromy action is a first tool to classify Quillen fibrations. In very special situations, it suffices to recover the whole fibration, as we can see in the following example.

\begin{example}
(cf. \cite{quillen}) If $p:E\to B$ is a Quillen fibration with discrete fibers (the only arrows in the fibers are identities), then the base-change functors $E_b\to E_{b'}$ must be bijections. It follows that a Quillen fibration with discrete fibers is essentially the same as a functor $B\to\set\subset\cat$ which is morphism inverting, or what is the same, a functor $\pi_1(B)\to\set$.

A Quillen fibration $p:E\to B$ with discrete fibers should be thought of as a covering of categories. Indeed, they yield coverings after applying the classifying space functor.
\end{example}

One is interest in understand how fibrations behave with respect to the classifying space functor. The next example shows that $B_fE\to B_fB$ need not be a fibration, so we shall look for a notion weaker than that.

\begin{example}
Let $E$ be the full subcategory of $I\times I$ with objects $(1,0)$, $(0,1)$ and $(1,1)$. Then the second projection $E\to I$ is a fibration. Since the fibers are contractible it is, in fact, a Quillen fibration. Despite this, the induced map of topological spaces is not a fibration, as one can easily check.
\end{example}

Recall that a {\em quasifibration} of topological spaces $f:X\to Y$ is a map such that the inclusions of the actual fibers into the homotopy fibers are weak homotopy equivalences. They extend the notion of fibration, and their most important feature is that they yield long exact sequences relating the homotopy groups of the fibre, the total space and the base space.

%If $f:X\to Y$ and $f':X'\to Y$ are maps of topological spaces, a {\em fiber homotopy equivalence} $g:X\to X'$ is a homotopy equivalence with inverse $h:X'\to X$ such that $g$, $h$ and the homotopies $hg\cong 1_X$ and $gh\cong 1_{X'}$ are all maps over $Y$. If $g:X\to X'$ is a fiber homotopy equivalence, then it is easy to see that $f$ is a quasifibration if and only if $f'$ is so.

\begin{theorem}\label{prop}
If $p:E\to B$ is a Quillen fibration, then the induced map $p_*:B_fE\to B_fB$ is a quasifibration of topological spaces.
\end{theorem}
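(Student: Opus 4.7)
The strategy is to apply a bisimplicial Theorem B criterion directly to the map of bisimplicial sets $p_*:N_fE\to N_fB$. Since every arrow in the identity fibration $\id_B:B\to B$ is cartesian, $N_fB$ is horizontally constant, with $(N_fB)_{m,n}=NB_n$ independent of $m$; its diagonal is $NB$, so the realization $|d(N_fB)|$ coincides with $BB$ and the map under study is precisely $p_*:B_fE\to BB$. It is therefore enough to analyze the vertical fibers of $N_fE\to N_fB$ over the simplices of $NB$.

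For each $\u b=(b_0\to\cdots\to b_n)\in NB_n$, the vertical fiber is the simplicial set $N_fE_{\u b}$, weakly equivalent to $NE_{b_0}$ via the mast map of Lemma \ref{m}. The next step is to identify the transition maps induced by the base-direction structure operators under this equivalence. For face and degeneracy operators of index $i>0$ the leftmost vertex $b_0$ is preserved, and direct inspection shows that the induced map on fibers corresponds, via $\mu$, to the identity on $NE_{b_0}$. For $d_0^v$, which deletes $b_0$ and promotes $b_1$ to the new leftmost vertex, the enhanced universal property of cartesian arrows (Lemma \ref{upc}) identifies the induced map $N_fE_{\u b}\to N_fE_{d_0\u b}$, via the mast equivalence, with the map on nerves induced by the base-change functor $\phi_*:E_{b_0}\to E_{b_1}$ along $\phi:b_0\to b_1$. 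Under the Quillen fibration hypothesis every such $\phi_*$ is a weak equivalence, so all transition maps between vertical fibers are weak equivalences.

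At this point I would invoke a standard bisimplicial quasifibration criterion: a map of bisimplicial sets $X\to Y$ with $Y$ horizontally constant and vertical fibers connected by weak equivalence transition maps realizes to a quasifibration $|d(X)|\to |Y|$. This is essentially Quillen's Theorem B in bisimplicial form, proved by filtering $Y$ by horizontal skeleta and gluing quasifibrations via Dold's theorem on unions. Applied to $N_fE\to N_fB$, it yields that $p_*:B_fE\to BB$ is a quasifibration, with actual fiber $BE_{b}$ over the vertex $b$, as claimed.

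The main obstacle is the combinatorial identification of the $d_0^v$-transition map with the base-change functor, which requires a careful cartesian-arrow chase in the spirit of the proofs of Lemmas \ref{m} and \ref{cleaved fiberwise}. An alternative route, which replaces the bisimplicial criterion by the classical form of Theorem B, would use the factorization $p_*=Bp\circ k_*$ with $k_*$ the weak equivalence of Theorem \ref{thm2}: since $p$ is a Grothendieck fibration one has $E_b\simeq E/b$, so the Quillen fibration hypothesis is exactly the hypothesis of Theorem B applied to $p$, giving that $Bp$ is a quasifibration, after which one must still verify that $k_*$ restricts to weak equivalences on the fibers over $BB$ in order to transfer the quasifibration property to $p_*$.
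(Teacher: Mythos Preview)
Your proposal is correct and follows essentially the same route as the paper. Both arguments reduce to the skeletal induction and Dold--Thom gluing from Quillen's original lemma, using Lemma~\ref{m} to identify the vertical fibers $N_fE_{\u b}$ with $NE_{b_0}$; you package this as an abstract bisimplicial quasifibration criterion and spell out the identification of the $d_0$-transition map with base-change, while the paper sketches the skeletal induction directly and leaves the fiber-transition analysis implicit in the phrase ``can be deformed into product fibrations using \ref{m}''.
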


\begin{proof}
It is essentially that of \cite[lemma p.14]{quillen}.
We endow $B_fB=BB$ with the canonical cellular structure. We prove that the restriction of $p_*$ to the $n$-th skeleton $sk_n(BB)$ is a quasifibration by induction on $n$, from which the result follows. To prove the inductive step we write $sk_n(BB)$ as the union $U\cup V$, where $U$ is obtained by removing the barycenters of the $n$-cells and $V$ is the union of the interiors of the $n$-cells, and prove that $p_*$ is a quasifibration when restricted to $U$, $V$ and $U\cap V$.

We denote $|N_fE_{\u b}|$ by $B_fE_{\u b}$ (see \ref{ultima}). Realizing first in the $m$-direction, 
the restriction of $p_*$ to the interior of the $n$-cell indexed by $\u b\in NB_n$ can be identified with the restriction of the projection $B_fE_{\u b}\times \Delta^n\to\Delta^n$ to the interior of the topological $n$-simplex $\Delta^n$. It follows from this that $p_*|_V$ and $p_*|_{U\cap V}$ are quasifibrations.

We deform $p_*^{-1}(U)$ into $p_*^{-1}(sk_{n-1}(B_fB))$ by using the radial deformation of $\Delta^n$ minus its barycenter into $\partial\Delta^n$, and use the inductive assumption to conclude that $p_*|_U$ is a quasifibration.
We must verify that if this deformation carries $x$ into $x'$, then the map $g:p_*^{-1}(x)\to p_*^{-1}(x')$ induced by the deformation is a weak homotopy equivalence. Let $x$ be a point in the interior of the $n$-cell indexed by $\u b\in NB_n$. If the radial deformation push $x$ into the open cell indexed by the face $\u b'$ of $\u b$, then $p_*^{-1}(x)=B_fE_{\u b}$ and $p_*^{-1}(x')=B_fE_{\u b'}$. Fixed a cleavage $\Sigma$, the composition (cf. \ref{m})
$$BE_{b_0}\xto\nu B_fE_{\u b}\xto g B_fE_{\u b'} \xto\mu BE_{b_0'},$$ 
(with $\nu=\nu(\u b)$ and $\mu=\mu(\u b')$) equals a base-change functor over the arrow $b_0\to b'_0$ of $\u b$ (more precisely, it is the composition of the base-changes given by $\Sigma$ over the arrows $b_i\to b_{i+1}$). Since $p$ is a Quillen fibration, and $\nu$ and $\mu$ are weak equivalences, it follows from the 3-for-2-property that $g$ is a homotopy equivalence and thus the result.
\end{proof}

The last theorem shows an interesting feature of the fibred nerve: it carries Quillen fibrations into quasifibrations. The question of whether or not $BE\to BB$ is a quasifibration is rather unclear, and this can be understood as a disadvantage of the classic nerve when dealing with fibrations.

\begin{corollary}[Quillen's Theorem B]\label{thm b}
If $u:A\to B$ is a map between small categories such that $A/b\to A/b'$ is a weak equivalence for all $b\to b'\in\fl(B)$, then there is a long exact sequence 
$$\dots\xto\partial \pi_k(A/b,\overline a)\to\pi_k(A,a)\xto{u_*}\pi_k(B,b)\xto\partial\pi_{k-1}(A/b,\overline a)\to\dots$$
where $a\in\ob(A)$, $b=u(a)$ and $\overline a=(a,\id_b)$.
\end{corollary}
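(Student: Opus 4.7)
The plan is to apply Theorem~\ref{prop} to the fibration $\pi : E^u \to B$ associated to $u$ in Section~\ref{decomposition}. Its fibers satisfy $E^u_b \cong A/b$, and for any chosen cleavage and any $\phi : b \to b'$ in $B$, the base-change functor $\phi_* : E^u_b \to E^u_{b'}$ corresponds under this identification to the canonical functor $A/b \to A/b'$ obtained by post-composition with $\phi$. By hypothesis each such functor is a weak equivalence, so $\pi$ is a Quillen fibration. Theorem~\ref{prop} then yields that $\pi_* : B_f E^u \to B_f B$ is a quasifibration of topological spaces.

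Next, I identify the spaces in the long exact sequence of $\pi_*$ with those appearing in the statement. By Lemma~\ref{replacement}, the map $i : A \to E^u$, $a \mapsto \overline a$, is a weak equivalence; by Theorem~\ref{thm2} applied to $\pi$ and to $\id_B$, we have natural weak equivalences $B_f E^u \simeq BE^u$ and $B_f B \simeq BB$, under which the composite $BA \to B_f E^u \to B_f B$ corresponds to $Bu$. The actual fiber of $\pi_*$ over the vertex $b \in B_f B$ has the homotopy type of $B E^u_b = B(A/b)$, with basepoint $\overline a$. Substituting these identifications into the long exact sequence of the quasifibration yields exactly the desired sequence.

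The main technical step is identifying the fiber of $\pi_*$ over $b$ with $B(A/b)$ up to weak equivalence. The preimage of the vertex $b$ is the geometric realization of the sub-bisimplicial set $F \subset N_f E^u$ consisting of those simplices whose base is the degenerate $n$-simplex $(b,\ldots,b)$. Regarding $N E^u_b$ as a bisimplicial set constant in the vertical direction, the mast map $\mu$ of Lemma~\ref{m} assembles into a morphism $F \to N E^u_b$ in $\bsset$ which, at each fixed vertical level $n$, is precisely the weak equivalence of that lemma applied to $\u b = (b,\ldots,b)$. Proposition~\ref{levelwise} then gives that $\mu$ is itself a weak equivalence of bisimplicial sets, so $|d(F)|$ has the homotopy type of $B E^u_b = B(A/b)$, completing the identification required to feed the long exact sequence and finish the proof.
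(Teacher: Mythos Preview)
Your strategy is exactly the paper's: replace $u$ by the associated splitting fibration $\pi:E^u\to B$, check it is a Quillen fibration because $E^u_b\cong A/b$, invoke Theorem~\ref{prop}, and then transport the long exact sequence of the quasifibration $B_fE^u\to B_fB$ along the weak equivalences $BA\simeq BE^u\simeq B_fE^u$ and $BB\cong B_fB$. The paper is simply terser about the fiber identification, packaging it in a commuting-up-to-homotopy square rather than spelling it out.

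There is one technical slip in your last paragraph: the mast map $\mu$ does \emph{not} assemble into a morphism of bisimplicial sets $F\to NE^u_b$ (the target taken constant in the $n$-direction). The trouble is the $0$-th horizontal face. For $s\in F_{m,n}$ the simplex $d_0^h s$ has as mast the \emph{first} column of $s$, not the zeroth; since the base is degenerate at $b$ these two columns are related by cartesian arrows over $\id_b$, hence by isomorphisms in $E^u_b$, but they need not be equal. So $\mu(d_0^h s)\neq\mu(s)$ in general, and $\mu$ is not a bisimplicial map.

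The repair is immediate: use the homotopy inverse $\nu$ constructed in the proof of Lemma~\ref{m}. For the degenerate base $\u b=(b,\dots,b)$ and any normal cleavage, $\nu(\u a)$ is the simplex all of whose columns equal $\u a$ and all of whose horizontal arrows are identities; this is precisely the iterated horizontal degeneracy of $\u a\in F_{m,0}$. Hence $\nu$ \emph{does} define a bisimplicial map from the constant bisimplicial set $NE^u_b$ into $F$, which at each fixed $n$ is the weak equivalence of Lemma~\ref{m}. Proposition~\ref{levelwise} then gives $|d(F)|\simeq B(A/b)$ as you intended, and the rest of your argument goes through.
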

\begin{proof}
Let $i:A\to E^u$ be the canonical map into the associated fibration, $r$ its right adjoint and $w:A/b\to A$ be the map $(a,u(a)\to b)\mapsto a$. In the diagram
$$\xymatrix{A/b \ar[r]^w\ar@{=}[d] & A \ar[r]^u & B\ar@{=}[d]\\
		E^u_b \ar@{^{(}->}[r] & E^u \ar[r]^\pi \ar[u]^r & B}$$
the left square commutes, and since $\pi i=u$ and $r$ is a homotopy inverse to $i$ the right square commutes up to homotopy.
We conclude that the homotopy groups of $A/b$, $A$ and $B$ can be identified naturally with that of the base, the fiber and the total category of the associated fibration $E^u\to B$. It is a Quillen fibration, and thus the result follows from \ref{prop}.
\end{proof}

\subsection{Group actions and TCP}

Regarding small categories as combinatorial models for homotopy types, it is natural to investigate how they behave under the action of a group.
In this section we derive a splitting fibration from a small category endowed with a group action, and relate its cleaved nerve with a twisted cartesian product in the sense of \cite{may}. We also study the spectral sequence \ref{spectral sequence} in this particular case.

\medskip

A simplicial group $G$ {\em operates on a simplicial set $K$ (from the left)} if there is a simplicial map $G\times K\to K$, $(g,k)\mapsto g\cdot k$ satisfying 
$1_n\cdot k=k$ for all $k\in K_n$ and $g_1\cdot(g_2\cdot k)=(g_1g_2)\cdot k$ for all $k\in F_n$ and $g_1,g_2\in G_n$. Here $1_n$ denotes the unit of $G_n$.
Given $A,B$ simplicial sets and $G$ a simplicial group which operates on $A$, a {\em twisted cartesian product (TCP)} with fibre $A$, base $B$ and group $G$ is a simplicial set $A\times_\tau B$ with simplices $(A\times_\tau B)_n=A_n\times B_n$ and faces and degeneracies given by
$$d_i(a,b)=\begin{cases}(d_ia,d_ib) & i>0\\
						(\tau(b)\cdot d_0a,d_0b) & i=0\end{cases}\qquad
	s_i(a,b)=(s_ia,s_ib),\ \ i\geq0.$$
Here $\tau:B_n\to G_{n-1}$ is a function which must satisfy some standard identities in order to make $A\times_\tau B$ a simplicial set. This $\tau$ is called the {\em twisting function}.

Let $G$ be a group, and let $A$ be a small category on which $G$ acts. This action can be seen as a group morphism $G\to Aut(A)$, $g\mapsto u_g$, or equivalently, as a functor $G\to\cat$ that maps the unique object of $G$ to $A\in\ob(\cat)$. The Grothendieck construction over this functor is a splitting fibration $p:G\rtimes A \to G$ over $G$.

The constant simplicial group $G$ (in which every face and degeneracy operator is the identity) operates on $NA$ from the left via the formula
$$g\cdot (a_0\to\dots\to a_n) =(u_g(a_0)\to \dots\to u_g(a_n)).$$

\begin{proposition}
The diagonal of the cleaved nerve $dN_c(G\rtimes A)$ can be regarded as a TCP between the nerves of $A$ and $G$, namely $NA\times_\tau NG$.
\end{proposition}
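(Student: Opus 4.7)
The plan is to use Lemma \ref{mastil} to produce a level-wise bijection and then verify that the diagonal face and degeneracy operators match the TCP formulas with twisting function $\tau(g_1,\dots,g_n)=g_1$. Since $G$ has a single object, every fiber of $p:G\rtimes A\to G$ is canonically $A$, so the mast of an $(m,n)$-simplex of $N_c(G\rtimes A)$ is an $m$-simplex of $NA$ while the base is an $n$-simplex of $NG$, i.e.\ a tuple $(g_1,\dots,g_n)\in G^n$. Because the distinguished cleavage on the Grothendieck construction is closed, Lemma \ref{mastil} gives bijections $N_c(G\rtimes A)_{m,n}\cong NA_m\times NG_n$, so restricting to $m=n$ identifies $dN_c(G\rtimes A)$ with $NA\times NG$ set-theoretically.

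I would then make an arbitrary simplex explicit: if the mast is $a=(a_0\xto{\alpha_1}\cdots\xto{\alpha_m}a_m)$ and the base is $b=(g_1,\dots,g_n)$, closedness of the cleavage forces the entry at position $(i,j)$ to be $u_{g_j\cdots g_1}(a_i)$, the horizontal arrows to be the cleavage arrows $(\id,g_{j+1})$, and the vertical arrow in column $j$ at row $i$ to be $u_{g_j\cdots g_1}(\alpha_i)$. Consequently, horizontal operators act only on the mast, and for $j\geq 1$ the vertical $d_j^v$ and $s_j^v$ leave the $0$-th column fixed. Hence on the diagonal $d_i(a,b)=(d_ia,d_ib)$ for $i\geq 1$ and $s_i(a,b)=(s_ia,s_ib)$ for all $i\geq 0$, matching the TCP formulas in those cases.

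The key case is $d_0=d_0^v d_0^h$: applying $d_0^h$ cuts the $0$-th column down to $d_0a=(a_1\to\cdots\to a_m)$, and then $d_0^v$ deletes that column, so the new mast is what was the $1$-st column restricted to rows $1,\dots,m$, namely $u_{g_1}(d_0 a)=g_1\cdot d_0 a$ by the definition of the $G$-action on $NA$. Combined with $d_0 b=(g_2,\dots,g_n)$, this gives
$$d_0(a,b)=(g_1\cdot d_0 a,\,d_0 b),$$
which is exactly the TCP face formula with $\tau:NG_n\to G$, $\tau(g_1,\dots,g_n)=g_1$. The remaining bookkeeping is to check that $\tau$ is a twisting function in the sense of \cite{may}; since $G$ is constant as a simplicial group, the required identities reduce to trivial equalities such as $g_2^{-1}(g_2 g_1)=g_1$, so no substantive obstacle appears.
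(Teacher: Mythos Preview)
Your proof is correct and follows essentially the same route as the paper's: define the bijection via $s\mapsto(\text{mast},\text{base})$, invoke Lemma~\ref{mastil} for bijectivity, and check that the diagonal simplicial operators match the TCP formulas with $\tau(g_1,\dots,g_n)=g_1$. The paper compresses the verification of the simplicial identities into ``one checks easily'', whereas you spell out the $d_0$ case explicitly; note only that your labelling of $d^h$/$d^v$ is swapped relative to the paper's convention (in the paper the vertical direction is the first index $m$), but since the diagonal operator is $d_i^hd_i^v=d_i^vd_i^h$ this is harmless.
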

\begin{proof}
Let $\tau:NG_n\to G_{n-1}=G$ be the projection $\tau(\ast\xto{g_1}\ast\xto{g_2}\dots\xto{g_n}\ast)=g_1$, and let $NA\times_\tau NG$ be the TCP with twisting function $\tau$. We define a simplicial map 
$\varphi:dN_c(G\rtimes A)\to NA\times_\tau NG$ by giving to each simplex $s\in N_c(G\rtimes A)_{n,n}$ the pair $(a,b)$, where $a$ is the mast of $s$ and $b$ is its base. One checks easily that $\varphi$ is actually a simplicial map, and it is an isomorphism because $G\rtimes A\to G$ is splitting, together with \ref{mastil}.
\end{proof}

Given $G$ acting on $A$, the fibration $G\rtimes A\to G$ has a unique fiber, which is isomorphic to $A$. Thus the modules $H_m(F)$ of the spectral sequence \ref{spectral sequence} are just the homology groups of $A$ endowed with the action of $G$. Writing $A/\!/G=G\rtimes A$ for the homotopy theoretic quotient (cf. \ref{thm t}) we obtain the following version of the Eilenberg-Moore spectral sequence (cf. \cite[p.775]{anderson}) as an application of \ref{spectral sequence}.

\begin{proposition}
There is a spectral sequence $\{X_{m,n}^r\}$ which converges to the homology of the homotopy theoretic quotient $A/\!/G$ and whose second sheet consists of the group homology of $G$ with coeficients in the homology of $A$.
$$X^2_{m,n}=H_n(G,H_m(A))\then H_{m+n}(A/\!/G)$$
\end{proposition}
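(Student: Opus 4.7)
The plan is simply to specialize Theorem \ref{spectral sequence} to the fibration $p:G\rtimes A\to G$ and to identify the resulting terms with classical group homology.

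First I would note that the Grothendieck construction of the functor $G\to\cat$, $\ast\mapsto A$ corresponding to the action, yields a splitting fibration $p:G\rtimes A\to G$ whose total category is by definition $A/\!/G$. Since $G$ has a unique object $\ast$, there is a unique fiber $(G\rtimes A)_\ast\cong A$. Applying Theorem \ref{spectral sequence} to this fibration produces a spectral sequence
$$X^2_{m,n}=H_n(G,H_m(F))\then H_{m+n}(A/\!/G),$$
so it only remains to identify the coefficient modules $H_m(F)$ and to see that, with $G$ regarded as a one-object category, $H_n(G,-)$ in the sense of \cite{quillen} agrees with classical group homology.

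Next I would unwind the module $H_m(F):G\to\ab$. By construction it sends the unique object to $H_m(A)$ and each arrow $g\in\fl(G)$ to the map induced in homology by the base-change functor. For the canonical closed cleavage of the Grothendieck construction, the base-change functor at $g$ is precisely $u_g:A\to A$, so the induced action of $G$ on $H_m(A)$ coincides with the standard one coming from the given action on $A$. Thus $H_m(F)$ is the $G$-module $H_m(A)$ with its natural $G$-action.

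Finally, I would invoke the standard identification between the homology of the one-object category $G$ with coefficients in a functor $G\to\ab$ (i.e.\ a $G$-module) and ordinary group homology: the Quillen definition $H_n(G,M)=\colim^n_G M$ computes the left derived functors of coinvariants $M\mapsto M_G$, which is the classical definition of $H_n(G,M)$. With this identification, $X^2_{m,n}=H_n(G,H_m(A))$ as required. The only mildly delicate point is matching the two definitions of group homology, but this is a classical fact (it follows, for instance, from the explicit complex $C_\ast(G,M)=\bigoplus_{g_1,\dots,g_m}M$ exhibited in the body of the paper, which for $G$ a one-object category reduces to the bar resolution). Everything else is a direct application of Theorem \ref{spectral sequence}.
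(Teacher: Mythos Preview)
Your proposal is correct and follows essentially the same approach as the paper: specialize Theorem \ref{spectral sequence} to the fibration $G\rtimes A\to G$, observe that the unique fiber is $A$ and that the modules $H_m(F)$ are $H_m(A)$ with the induced $G$-action, and then read off the result. The paper's argument is actually terser than yours---it states these identifications in the paragraph preceding the proposition and treats the match between $H_n(G,-)$ in Quillen's sense and classical group homology as implicit---so your added remarks on the base-change functors being $u_g$ and on the bar-complex identification are welcome elaborations rather than a different route.
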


\bigskip

\bigskip

%\address{Departamento de Matemática, FCEN, Universidad de Buenos Aires\\Pabellon I - Ciudad Universitaria (1428)\\
%Buenos Aires, Argentina.\\

 \texttt{mdelhoyo@dm.uba.ar}

\end{document}